\tikzstyle{vertex}=[circle,fill=black,inner sep=2pt]
\tikzstyle{vertrect}=[draw,rectangle,inner sep=2pt]
\tikzstyle{vertdia}=[draw,diamond,inner sep=2pt]
\date{}
\theoremstyle{plain}
      \newtheorem{theorem}{Theorem}[section]
      \newtheorem{lemma}[theorem]{Lemma}
            \newtheorem{claim}[theorem]{Claim}
      \newtheorem{observation}[theorem]{Observation}
      \newtheorem{corollary}[theorem]{Corollary}
      \newtheorem{conjecture}[theorem]{Conjecture}
\theoremstyle{definition}
\theoremstyle{remark}
\def\twr{\mbox{\rm twr}}
\title{New lower bounds for hypergraph Ramsey numbers}
\author{Dhruv Mubayi\thanks{Department of Mathematics, Statistics, and Computer Science, University of Illinois, Chicago, IL, 60607 USA.  Research partially supported by NSF grant DMS-1300138. Email: {\tt mubayi@uic.edu}} \and Andrew Suk\thanks{Department of Mathematics,  University of California at San Diego, La Jolla, CA, 92093 USA. Supported by NSF grant DMS-1800736, NSF CAREER award DMS-1800746, and an Alfred Sloan Fellowship. Email: {\tt asuk@ucsd.edu}.\newline Mathematics Subject Classification (2010) codes: 05D10, 05D40, 05C65}}
\begin{document}

\maketitle

\begin{abstract}
 The \emph{Ramsey number} $r_k(s,n)$ is the minimum $N$ such that for every red-blue coloring of the $k$-tuples of $\{1,\ldots, N\}$, there are $s$ integers such that every $k$-tuple among them is red, or $n$ integers such that every $k$-tuple among them is blue.  We prove the following new lower bounds for 4-uniform  hypergraph Ramsey numbers:
$$r_4(5,n) > 2^{n^{c\log n}} \qquad \hbox{ and } \qquad
r_4(6,n) > 2^{2^{cn^{1/5}}},$$
where $c$ is an absolute positive constant. This substantially improves the previous best bounds of $2^{n^{c\log\log n}}$ and $2^{n^{c\log n}}$, respectively. Using previously known upper bounds, our result implies that the growth rate of $r_4(6,n)$ is double exponential in a power of $n$.

As a consequence, we obtain similar bounds for the $k$-uniform Ramsey numbers
$r_k(k+1, n)$ and $r_k(k+2, n)$ where the exponent is replaced by an appropriate tower function.
This almost solves the question of determining the tower growth rate for {\emph {all}} classical off-diagonal hypergraph Ramsey numbers, a question first  posed by
Erd\H os and Hajnal in 1972. The only problem that remains is to prove that $r_4(5,n)$ is double exponential in a power of $n$.
\end{abstract}

\section{Introduction}

 A $k$-uniform hypergraph $H$ with vertex set $V$ is a collection of $k$-element subsets of $V$.  We write $K^{(k)}_n$ for the complete $k$-uniform hypergraph on an $n$-element vertex set.  The \emph{Ramsey number} $r_k(s,n)$ is the minimum $N$ such that every red-blue coloring of the edges of  $K^{(k)}_N$  contains a monochromatic red copy of $K_s^{(k)}$ or a monochromatic blue copy of $K^{(k)}_n$.

 \emph{Diagonal} Ramsey numbers refer to the special case when $s = n$, i.e. $r_k(n,n)$, and have been studied extensively over the past 80 years.  Classic results of Erd\H os and Szekeres \cite{ES35} and Erd\H os \cite{E47} imply that $2^{n/2} < r_2(n,n) \leq 2^{2n}$ for every integer $n > 2$.  While small improvements have been made in both the upper and lower bounds for $r_2(n,n)$ (see \cite{S75,C09}), the constant factors in the exponents have not changed over the last 70 years.

Unfortunately, for 3-uniform hypergraphs, our understanding of $r_3(n,n)$ is much less.  Results of Erd\H os, Hajnal, and Rado \cite{EHR} gives the best known lower and upper bounds for $r_3(n,n)$, $$2^{c_1n^2}<r_3(n,n)<2^{2^{c_2n}},$$ where $c_1$ and $c_2$ are positive constants.  For $k \geq 4$, there is also a difference of one exponential between the known lower and upper bounds for $r_k(n,n)$, that is, \begin{equation}\label{diag}\twr_{k-1}(c_1n^2) \leq r_k(n,n) \leq \twr_k(c_2n),\end{equation}

\noindent where the tower function $\twr_k(x)$ is defined by $\twr_1(x) = x$ and $\twr_{i + 1}(x) = 2^{\twr_i(x)}$ (see \cite{ES35,ER,EH72}).  A notoriously difficult conjecture of Erd\H os, Hajnal, and Rado states that the upper bound in (\ref{diag}) is essentially the truth, that is, there are constructions which demonstrates that $r_k(n,n) > \twr_k(cn)$, where $c = c(k)$.  The crucial case is when $k = 3$, since a double exponential lower bound for $r_3(n,n)$ would verify the conjecture for all $k\geq 4$ by using the well-known stepping-up lemma of Erd\H os and Hajnal (see \cite{graham}).

\begin{conjecture}[Erd\H os]\label{3conj}
For $n\geq 4$, $r_3(n,n) > 2^{2^{cn}}$ where $c$ is an absolute constant.

\end{conjecture}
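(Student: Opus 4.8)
I should say at the outset that Conjecture~\ref{3conj} is one of the central open problems in the area, so what follows is the natural line of attack together with the point at which it stalls, not a complete argument.

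The goal is to construct an explicit red-blue coloring of $\binom{[N]}{3}$ with $N=2^{2^{cn}}$ and no monochromatic $K_n^{(3)}$. The first thing to dispose of is a uniformly random coloring: the expected number of monochromatic cliques of order $n$ is $\binom{N}{n}\,2^{\,1-\binom{n}{3}}$, so deleting one vertex from each monochromatic clique survives only up to $N=2^{(1/6+o(1))n^2}$, which is exactly the Erd\H os--Hajnal--Rado lower bound $2^{c_1 n^2}$ quoted above. Hence any proof of the conjecture must use a genuinely structured coloring.

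The one general device for raising uniformity is the Erd\H os--Hajnal stepping-up lemma, which turns a coloring of $\binom{[M]}{k}$ with no monochromatic clique of order $m$ into a coloring of $\binom{[2^M]}{k+1}$ with no monochromatic clique of order roughly $2m$ --- but in its usable form it needs $k\ge 3$, and no known construction for triples improves on $r_3(n,n)>2^{cn^2}$, which is exactly why this bound has not moved in seventy years. Running the machine the other way, i.e.\ stepping \emph{down} from a strong uniformity-$4$ construction such as the ones developed later in this paper, is not possible by any method I am aware of.

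So a proof would have to build a coloring directly on triples, and the most plausible framework is algebraic: identify $[N]$ with a vector space or a finite field and color $\{x<y<z\}$ according to the value of a low-degree polynomial or quadratic form evaluated on $(x,y,z)$, or on the gaps $y-x$ and $z-y$. One would then argue in two stages: (i) a Ramsey/pigeonhole step showing that a monochromatic clique of order $n$ would force the polynomial to behave coherently on a combinatorially large sub-configuration, and (ii) an algebraic input --- a Schwartz--Zippel-type bound, or a point count over $\mathbb{F}_q$ --- ruling out such a configuration once $N$ is past a single exponential. The obstacle is that in every construction of this kind known so far the losses in (i) and (ii) compound, leaving one a full exponential short of $2^{2^{cn}}$: even the strongest bound in the same spirit, namely $r_4(5,n)>2^{n^{c\log n}}$ proved later in this paper, is a tower of height $2$ only in a power of $\log n$, not in $n$. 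Closing that last exponential gap for uniformity $3$ is essentially the whole content of the conjecture, and I would not expect the techniques developed below --- which do give a genuine double-exponential (in a power of $n$) lower bound for $r_4(6,n)$ but rely on having uniformity at least $4$ --- to descend to triples without a fundamentally new idea; indeed one cannot even rule out that the conjecture is false.
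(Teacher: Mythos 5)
You correctly recognized that the statement labeled as a conjecture is precisely that — a famous open problem of Erd\H os which the paper does not prove and does not claim to prove; the paper only states it (and offers \$500 for it, in the folklore version) as motivation. Your discussion of the probabilistic baseline, the failure of stepping-up below uniformity~$3$, and the algebraic direction is a sensible survey of the state of the art, and refusing to manufacture a ``proof'' was the right call: there is nothing in the paper to compare it to, since the authors' contribution is to lower bounds for $r_4(5,n)$ and $r_4(6,n)$, not for $r_3(n,n)$.

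One small point worth tightening: the paper's own remark is that a proof of Conjecture~\ref{3conj} would imply Conjecture~\ref{offconj} on $r_4(5,n)$ via stepping-up, so the logical arrow runs from triples to $4$-tuples, exactly as you say — the reverse implication is not known, and you are right that stepping \emph{down} is not available. You might also note that the current best lower bound $r_3(n,n) > 2^{c n^2}$ is not obtained by the naive deletion argument you sketch but by the Erd\H os--Hajnal--Rado construction (which already steps up from a graph coloring); the random deletion bound is weaker. This does not affect your main point, but it is the accurate attribution.
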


 \emph{Off-diagonal} Ramsey numbers, $r_k(s,n)$, refer to the special case when $k,s$ are fixed and $n$ tends to infinity.  It is known \cite{AKS,Kim,B,BK} that $r_2(3,n) =\Theta(n^2/\log n)$, and more generally for fixed $s > 3$, $r_2(s,n) = n^{\Theta(1)}$.  For 3-uniform hypergraphs, a result of Conlon, Fox, and Sudakov \cite{CFS} shows that $$2^{c_1n\log n} \leq r_3(s,n) \leq 2^{c_2n^{s-2}\log n},$$  where $c_1$ and $c_2$ depend only on $s$.  For $k$-uniform hypergraphs, where $k\geq 4$, it is known that
$r_k(s,n) \leq \twr_{k-1}(n^{c}),$ where $c = c(s)$ \cite{ER}.  By applying the Erd\H os-Hajnal stepping up lemma in the off-diagonal setting, it follows that \begin{equation}\label{off}r_k(s,n) \geq \twr_{k-1}(c'n),\end{equation} for $k\geq 4$ and $s \geq 2^{k-1} - k + 3$, where $c' = c'(s)$.  In 1972, Erd\H os and Hajnal~\cite{EH72} conjectured that (\ref{off}) holds for every fixed $k\geq 4$ and $s \geq k + 1$.  Actually, this was part of a more general conjecture that they posed in that paper (see \cite{MS15, MS16} for details). In \cite{CFS13}, Conlon, Fox, and Sudakov verified the Erd\H os-Hajnal conjecture for all $s\geq \lceil 5k/2\rceil - 3$.   Very recently, the current authors \cite{MS15} and independently Conlon, Fox, and Sudakov \cite{CFS15} verified the conjecture for all $s \geq k + 3$ (using different constructions).
Since
$2^{k-1} - k + 3=\lceil 5k/2\rceil - 3=k+3=7$ when $k=4$, all three of these approaches succeed in proving a double exponential lower bound for $r_4(7,n)$
but fail for $r_4(6,n)$ and $r_4(5,n)$.
Just as for diagonal Ramsey numbers, a double exponential in $n^c$ lower bound for $r_4(5,n)$ and $r_4(6,n)$ would imply $r_k(k + 1,n) > \twr_{k-1}(n^{c'})$ and $r_k(k + 2,n) > \twr_{k-1}(n^{c'})$ respectively, for all fixed $k\geq 5$.  This follows from a variant of the stepping-up lemma that we will describe in Section~2.
Therefore, the difficulty in verifying (\ref{off}) for the two remaining cases, $s = k+1$ and $k+2$, is due to our lack of understanding of $r_4(5,n)$ and $r_4(6,n)$. Consequently, showing double exponential lower bounds for $r_4(5,n)$ and $r_4(6,n)$ are the only two problems that remain to determine the tower growth rate for all off-diagonal hypergraph Ramsey numbers.

Until very recently, the only lower bound for both $r_4(5,n)$ and $r_4(6,n)$ was  $2^{cn}$,  which was implicit in the paper of Erd\H os and Hajnal~\cite{EH72}. Our results in~\cite{MS15c,MS15} improved both these bounds to
\begin{equation} \label{best}  r_4(5,n)>2^{n^{c\log\log n}} \qquad \hbox{  and } \qquad r_4(6,n) >2^{n^{c\log n}}\end{equation} and these are the current best known bounds. As mentioned above, the bounds in (\ref{best}) imply the corresponding improvements to the lower bounds for $r_k(k+1, n)$ and $r_k(k+2, n)$.  In this paper we further substantially improve both lower bounds in (\ref{best}).

\begin{theorem}\label{mainthm}
For all $n\geq 6$,
$$r_4(5,n)> 2^{n^{c\log n}} \hspace{1cm}\textnormal{and}\hspace{1cm}r_4(6,n)> 2^{2^{cn^{1/5}}},$$
where $c > 0$ is an absolute constant.

\end{theorem}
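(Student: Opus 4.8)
The plan is to give two constructions, one red-blue coloring of $K^{(4)}_N$ for each of the two bounds, and in each case exhibit a "stepping-up"-style amplification that boosts a good coloring of triples into a coloring of $4$-tuples. For the bound $r_4(6,n) > 2^{2^{cn^{1/5}}}$, the natural starting point is a strong lower bound for $r_3(4,m)$ (or more precisely for a weighted/probabilistic variant of it that records how large a blue clique can be avoided). Recall that $r_3(4,m) > 2^{m^{c'}}$-type bounds, or the ordered/lexical variants used in \cite{MS15,MS15c}, are available; the idea is to take a vertex set $V=\{1,\dots,N\}$ with $N=2^{2^{cn^{1/5}}}$, associate to each pair $\{i,j\}$ with $i<j$ the most significant bit $\delta(i,j)$ in which $i$ and $j$ differ (this is the Erd\H os--Hajnal device), and then color a $4$-tuple $v_1<v_2<v_3<v_4$ according to the pattern of the three numbers $\delta(v_1,v_2)$, $\delta(v_2,v_3)$, $\delta(v_3,v_4)$ — whether they are monotone, "locally maximal/minimal", etc. — combined with the value of an auxiliary $3$-uniform coloring evaluated on the positions of these $\delta$'s. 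One color is reserved for the generic monotone case and is arranged so that a red $K^{(4)}_5$ would force $5$ integers with $\delta$-values forming a forbidden configuration; the other color is designed so that a blue $K^{(4)}_n$ projects down to a blue clique in the auxiliary $3$-coloring of size roughly $n^{1/5}$ (the exponent $1/5$ is exactly what one pays for the loss incurred by passing from $4$-tuples to triples in this style of argument). Choosing the auxiliary coloring on $\{1,\dots, \log N\}$ to have no blue $K^{(3)}$ on $cn^{1/5}$ vertices and no red clique of the relevant bounded size then yields the claim.

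For the bound $r_4(5,n) > 2^{n^{c\log n}}$ one cannot afford to lose a whole exponential, so the construction must be more delicate. Here the strategy is to start not from a $3$-uniform Ramsey coloring but from the current best lower-bound colorings for $r_3(5,n)$ or $r_3(4,n)$ — colorings that already achieve a super-exponential but sub-double-exponential rate $2^{n^{c\log n}}$ — and lift them to $K^{(4)}$ by a map that is essentially \emph{rate-preserving} rather than exponentiating. Concretely, I would take a colored set of size $M = 2^{n^{c\log n}}$ whose $3$-uniform coloring avoids a red clique on $5$ vertices and a blue clique on $n$ vertices (obtained by iterating a product/Erd\H os--Hajnal-type construction $\Theta(\log n)$ times, which is where the $n^{\log n}$ comes from), and define a $4$-uniform coloring on the \emph{same} vertex set so that (i) any red $K^{(4)}_5$ forces five vertices all of whose triples are red in the underlying $3$-coloring — impossible — and (ii) any blue $K^{(4)}_n$ forces $n$ vertices all of whose triples are blue. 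The point is to encode the $3$-uniform color of $\{v_2,v_3,v_4\}$ into the $4$-uniform color of $\{v_1,v_2,v_3,v_4\}$ in a way that is \emph{robust to which vertex plays the role of $v_1$}, so that a monochromatic $4$-uniform clique of size $t$ contains a monochromatic $3$-uniform clique of size $t-O(1)$ rather than $t^{1/k}$; this requires using an ordered coloring with a few extra colors for the boundary cases and verifying a short case analysis on the relative order of the four vertices.

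The main obstacle, and the heart of the paper, is item (i)–(ii) in the rate-preserving lift for $r_4(5,n)$: one must design the $4$-uniform coloring so that the forbidden red structure is genuinely small ($K^{(4)}_5$, i.e. only five vertices) while the blue side loses only a constant, not a root. In the standard stepping-up lemma the red side is forced to be large precisely because one spends colors encoding a lot of information; compressing the red obstruction down to five vertices means the coloring can use only a bounded amount of local information, and checking that this still precludes a red $K^{(4)}_5$ is the delicate combinatorial core. I expect the argument to proceed by: (a) fixing the precise local rule (the "type" of an ordered $4$-set as a function of a bounded window of $\delta$-values together with one bit of the underlying $3$-coloring); (b) proving the red-clique lemma by showing any $5$ vertices contain an ordered $4$-subset whose type is non-red, using a pigeonhole/monotonicity argument on the $\delta$-values of the $5$ vertices; (c) proving the blue-clique lemma by extracting from a blue $K^{(4)}_n$ a set of $n-O(1)$ vertices on which the underlying $3$-coloring is all-blue, again via a monotone-subsequence extraction on the $\delta$-values; and (d) optimizing the iterated base construction so that its size is $2^{n^{c\log n}}$ while controlling both clique numbers. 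Step (b) is where I expect to spend the most effort, since the number of vertices ($5$, resp. $6$) is so small that there is essentially no slack.
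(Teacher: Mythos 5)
Your proposal diverges from the paper's actual argument in a way that matters, and in fact both halves have gaps that would prevent them from going through.

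For $r_4(6,n)$ you propose the standard Erd\H os--Hajnal stepping-up from a $3$-uniform auxiliary coloring on $\{1,\ldots,\log N\}$ to a $4$-uniform coloring on $\{1,\ldots,N\}$. But this is precisely the route that is known to fail for $s=5,6$ when $k=4$: the usual $3\to 4$ step-up only produces a bounded red clique for $s\ge 2^{k-1}-k+3 = 7$, which is exactly why $r_4(7,n)$ was handled previously and $r_4(5,n), r_4(6,n)$ were not. The paper's central new idea, stated explicitly in the introduction, is to step up from a \emph{graph} ($2$-uniform) coloring $\phi$ rather than from a $3$-uniform one: the coloring $\chi$ of a $4$-tuple $v_1<\cdots<v_4$ is determined by the three values $\delta_1,\delta_2,\delta_3$ and the graph colors $\phi$ of the \emph{pairs} among them. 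Moreover, the auxiliary $\phi$ is not a Ramsey coloring in the usual sense; it is a random graph coloring with two nonstandard requirements: no monochromatic bipartite $K_{n,n}$, and no $n$-set all of whose ordered triples avoid the specific pattern $\phi(a_i,a_j)=\phi(a_j,a_k)=$ blue, $\phi(a_i,a_k)=$ red. Neither condition appears in your sketch, and without them the red-$K^{(4)}_6$ case analysis and the blue-side extraction (which buys $m=32n^5$, hence the $1/5$) have nothing to hang on.

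For $r_4(5,n)$ the situation is worse: your ``rate-preserving lift on the same vertex set'' would require a base $3$-uniform coloring on $M=2^{n^{c\log n}}$ vertices with no red $K^{(3)}_5$ and no blue $K^{(3)}_{n-O(1)}$, i.e., $r_3(5,n) > 2^{n^{c\log n}}$. No such bound is known; the best available is $r_3(s,n) \ge 2^{c n\log n}$ (Conlon--Fox--Sudakov), so a rate-preserving lift caps out at $2^{cn\log n}$, far short of $2^{n^{c\log n}}$. The paper does exponentiate: it steps up from a \emph{graph} coloring $\phi$ on $A=\{0,\ldots,\lfloor n^{c\log n}\rfloor-1\}$ to $V$ of size $2^{|A|}$. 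The reason $|A|$ can be as large as $n^{c\log n}$ is that $\phi$ is only required to avoid a red $K_{\lfloor\log n\rfloor}$ (an off-diagonal graph Ramsey condition, cheap via a biased random coloring), together with no blue $K_{n,n}$ and the same no-bad-pattern condition as before. The $4$-uniform color again depends on $\delta_1,\delta_2,\delta_3$ and pairwise $\phi$-values; ruling out a red $K^{(4)}_5$ is a direct case analysis on the order type of $(\delta_1,\delta_2,\delta_3,\delta_4)$, and ruling out a blue $K^{(4)}_{2n^4}$ uses a recursive ``largest $\delta$ splits the window'' extraction combined with Claims on transitivity-like constraints forced by blue $4$-tuples. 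None of this is a small perturbation of your plan; the choice of a $2$-uniform base and the precise non-Ramsey conditions on $\phi$ are where the new strength comes from, and your sketch would need to be rebuilt around them.
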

Using the stepping-up lemma (see Section 2) we obtain the following.
\begin{corollary} \label{cors}
For $n > k \geq 5$, there is a $c = c(k) > 0$ such that
$$r_k(k+1,n)> \twr_{k-2}(n^{c\log n}) \hspace{1cm}\textnormal{and}\hspace{1cm}r_k(k+2,n)> \twr_{k-1}(cn^{1/5}).$$
\end{corollary}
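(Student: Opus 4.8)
The plan is to feed the two bounds of Theorem~\ref{mainthm} into the stepping-up lemma of Section~2 and to iterate that lemma $k-4$ times. In the off-diagonal regime this lemma converts a red-blue coloring of $K^{(k)}_N$ with no red $K^{(k)}_s$ and no blue $K^{(k)}_n$ into a red-blue coloring of $K^{(k+1)}_{2^N}$ with no red $K^{(k+1)}_{s+1}$ and no blue $K^{(k+1)}_{n'}$, where $n'$ exceeds $n$ by at most a bounded amount depending only on $k$; equivalently,
$$r_k(s,n) > N \quad\Longrightarrow\quad r_{k+1}(s+1,\,n') > 2^N,\qquad n'\le n+O_k(1).$$
Thus one application raises the uniformity by one, raises the red parameter by exactly one---so that the relation $s=k+1$ (respectively $s=k+2$) is preserved all the way along---and adds one to the height of the tower appearing in the lower bound.

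For the first inequality, write the bound of Theorem~\ref{mainthm} as $r_4(5,n)>\twr_2(n^{c\log n})$ and apply the stepping-up lemma $k-4$ times: the uniformity goes $4\to k$, the red parameter goes $5\to k+1$, and the tower height goes $2\to k-2$, yielding $r_k(k+1,\,n+O_k(1))>\twr_{k-2}(n^{c\log n})$. Since $r_k$ is monotone nondecreasing in its last coordinate we may replace $n$ by $n-O_k(1)$ (valid once $n$ exceeds a $k$-dependent constant), and the resulting perturbation is harmless because $(n-O_k(1))^{c\log(n-O_k(1))}\ge n^{c'\log n}$ for a suitable $c'=c'(k)>0$ and all large $n$; hence $r_k(k+1,n)>\twr_{k-2}(n^{c'\log n})$. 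The remaining values of $n$ lie in a range bounded in terms of $k$, where the inequality follows, after shrinking $c'$, from the diagonal bound $r_k(k+1,n)\ge r_k(k+1,k+1)\ge \twr_{k-1}(c_1(k+1)^2)$ of (\ref{diag}), which dominates $\twr_{k-2}$ of any $k$-bounded quantity. The second inequality is identical: from $r_4(6,n)>\twr_3(cn^{1/5})$ the $k-4$ applications give $r_k(k+2,\,n+O_k(1))>\twr_{k-1}(cn^{1/5})$, and the same argument (again using (\ref{diag}) for the bounded range of small $n$) yields $r_k(k+2,n)>\twr_{k-1}(c'n^{1/5})$ for some $c'=c'(k)>0$ and all $n>k$.

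Essentially all of the difficulty has been front-loaded into the two ingredients we are permitted to assume: Theorem~\ref{mainthm}, and the availability in Section~2 of the stepping-up lemma in exactly this off-diagonal shape---uniformity up by one, red clique up by exactly one, blue clique up by only a bounded amount, ground set exponentiated. Granting those, the corollary is a routine $(k-4)$-fold iteration, and the only points needing attention are (i) confirming that the blue parameter does not inflate by more than $O_k(1)$ (or even $O_k(n)$, which would still be harmless) over the iterations, so that the functional forms $n^{c\log n}$ and $n^{1/5}$ survive up to adjusting constants, and (ii) tracking how the absolute constant of Theorem~\ref{mainthm} degrades into a $k$-dependent constant while absorbing the shift $n\mapsto n\pm O_k(1)$, which is what forces the small values of $n$ to be dispatched separately. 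I expect point~(i)---pinning down precisely how the blue clique number transforms under the off-diagonal stepping-up---to be the only place where anything beyond bookkeeping is involved.
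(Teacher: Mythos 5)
Your plan---iterate Lemma~\ref{step} $k-4$ times starting from Theorem~\ref{mainthm}, preserving $s=k+1$ (resp.\ $s=k+2$) and the tower height, then absorb the inflation of the blue parameter and dispatch small $n$ via monotonicity and~(\ref{diag})---is exactly the paper's (unstated) proof. The one inaccuracy is that Lemma~\ref{step} inflates the blue parameter multiplicatively, $n\mapsto 2kn$, not additively by $O_k(1)$; but you explicitly flag the $O_k(n)$ possibility and correctly observe that it is harmless after adjusting $c$, so nothing in the argument breaks.
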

A standard argument in Ramsey theory together with results in~\cite{CFS} for 3-uniform hypergraph Ramsey numbers yields the upper bound $r_k(k+2, n) <
\twr_{k-1}(c'n^3\log n)$, so we now know the tower growth rate of $r_k(k+2, n)$.

In \cite{MS15}, we established a connection between diagonal and off-diagonal Ramsey numbers.  In particular, we showed that a solution to Conjecture \ref{3conj} implies a solution to the following conjecture.
\begin{conjecture}\label{offconj}

For $n\geq 5$, there is an absolute constant $c> 0$ such that $r_4(5,n)  > 2^{2^{n^c}}$.

\end{conjecture}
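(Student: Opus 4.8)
\textbf{Plan for Conjecture~\ref{offconj}.}
The statement to be addressed is the conditional implication established in~\cite{MS15}: if Conjecture~\ref{3conj} holds, i.e.\ $r_3(n,n) > 2^{2^{cn}}$, then $r_4(5,n) > 2^{2^{n^{c'}}}$ for some absolute $c' > 0$. The natural strategy is a product/amalgamation construction that boosts a lower bound on a \emph{diagonal} $3$-uniform Ramsey number into a lower bound on an \emph{off-diagonal} $4$-uniform one. Concretely, I would start from a red-blue coloring $\phi$ of $K^{(3)}_M$ with $M = 2^{2^{cn}}$ containing no monochromatic $K^{(3)}_n$ (guaranteed by Conjecture~\ref{3conj}, applied with parameter roughly $n$), and from this build a coloring $\chi$ of the $4$-tuples of a much larger ground set $N$ that avoids a red $K^{(4)}_5$ and a blue $K^{(4)}_n$. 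The point is that forbidding a red clique of the tiny fixed size $5$ is a very weak constraint, so one has a lot of freedom in designing $\chi$; the blue side is where the diagonal input must be spent.

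The key steps, in order. First, set up the ground set as a ``blow-up tower'': index vertices by sequences, or more simply take $N = \binom{M}{2}$-type objects — pairs, or vertices of $K^{(3)}_M$ together with an auxiliary linear order — so that a $4$-tuple of the new ground set induces a bounded-size substructure in $K^{(3)}_M$ on which $\phi$ can be evaluated. Second, define $\chi$ on a $4$-set $S$ by a case analysis on the ``type'' of $S$ (how its coordinates collide or interleave): in the generic type, read off a specific triple determined by $S$ and color $S$ according to $\phi$ on that triple; in each degenerate type, use a fixed local rule (a bounded coloring of an ordered bounded set) chosen so that no $5$ vertices can all see red among themselves. Third, verify the red side: show that any red $K^{(4)}_5$ would force either a monochromatic configuration in the local rules that we have explicitly excluded, or $5$ generic $4$-tuples all mapping red under $\phi$, which by the design of the triple-assignment map would produce a forbidden small red pattern in $\phi$ — but since $5$ is a constant, this is a finite case-check, not a Ramsey-type argument. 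Fourth, and this is the crux, verify the blue side: show that a blue $K^{(4)}_n$ in $\chi$ yields, after projecting back, a monochromatic blue $K^{(3)}_m$ in $\phi$ with $m$ only polynomially smaller than $n$ (say $m \geq n^{c'}$, losing a power to the combinatorial overhead of disentangling types and orders via Ramsey/Dilworth-type pigeonholing on the auxiliary order). Since $\phi$ has no blue $K^{(3)}_m$ for $m \sim n$, and hence none for the still-large $m \sim n^{c'}$ (adjusting the starting parameter accordingly), no blue $K^{(4)}_n$ exists. Finally, track the ground-set size: $N$ is polynomial in $M = 2^{2^{cn}}$, hence $N > 2^{2^{n^{c'}}}$, giving the claimed bound.

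The main obstacle is the blue-side projection in step four: extracting a genuinely large monochromatic blue clique in the $3$-uniform coloring from a blue clique in the $4$-uniform one, while controlling how much is lost. The difficulty is that a blue $4$-set tells us about $\phi$ only through the triple-assignment map, and a blue $K^{(4)}_n$ gives constraints on $\binom{n}{4}$ triples that need not form a clique structure in $K^{(3)}_M$; one must use the auxiliary order to pass to a large subset on which the assignment becomes ``coherent'' (e.g.\ monotone in the order), and only then does a blue $4$-clique translate into a blue $3$-clique. This is exactly where the polynomial loss $n \mapsto n^{c'}$ enters, via an application of the Erd\H os--Szekeres/Dilworth machinery (or a direct iterated pigeonhole), and getting this bookkeeping right — ensuring the loss is only polynomial and not, say, a logarithm in the exponent which would collapse the double-exponential conclusion — is the technical heart of the argument. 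A secondary but routine obstacle is enumerating the degenerate types of $4$-sets and checking, for each, that the fixed local rule is compatible both with red-$K^{(4)}_5$-freeness and with the blue projection; this is finite but must be done carefully so the red side truly uses only that $5$ is a constant.
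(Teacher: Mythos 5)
There is a fundamental mismatch here: the statement you were asked about is a \emph{conjecture}, and the paper does not prove it --- indeed the abstract explicitly identifies showing that $r_4(5,n)$ is double exponential in a power of $n$ as the one remaining open problem. What the paper (and~\cite{MS15}) establishes is only a conditional implication: a positive solution to Conjecture~\ref{3conj} (that $r_3(n,n)>2^{2^{cn}}$, itself open) would yield Conjecture~\ref{offconj}. The unconditional bound the paper actually proves for $r_4(5,n)$ is $2^{n^{c\log n}}$ (Theorem~\ref{mainthm}, Section~4), which falls well short of $2^{2^{n^c}}$. Your proposal is framed as a plan for the conditional implication, so even if it were carried out in full it would not constitute a proof of the statement: it takes as input a coloring of $K^{(3)}_M$ with $M=2^{2^{cn}}$ and no monochromatic $K^{(3)}_n$, whose existence is exactly the content of the open Conjecture~\ref{3conj}.

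Beyond that framing issue, the proposal is a sketch rather than a proof even of the conditional statement. The construction is left unspecified (``pairs, or vertices of $K^{(3)}_M$ together with an auxiliary linear order''), the local rules for degenerate types are not written down, the finite case-check for red $K^{(4)}_5$-freeness is asserted but not performed, and the step you yourself identify as the crux --- extracting a large blue $K^{(3)}_m$ in $\phi$ from a blue $K^{(4)}_n$ in $\chi$ with only polynomial loss --- is exactly where all the difficulty of such stepping-up arguments lives (compare the delicate arguments via Properties I--IV, Observation~\ref{distinct}, and Lemmas~\ref{ststart}--\ref{ststart3} that the paper needs even for its weaker unconditional bounds). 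Declaring that step to be handled by ``Erd\H os--Szekeres/Dilworth machinery'' without exhibiting the coherent substructure and the bookkeeping does not close the gap. In short: the statement remains open, your argument is conditional on another open conjecture, and the conditional argument itself is only an outline with its central step unresolved.
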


The main idea in our approach is to apply stepping-up starting from a graph to construct a 4-uniform hypergraph, rather than the usual method of going from a 3-uniform hypergraph to a 4-uniform hypergraph. Although this approach was implicitly developed in~\cite{MS15}, here we use it explicitly.

For more related Ramsey-type results for hypergraphs, we refer the interested reader to \cite{MS15,MS15c,MS16}.   All logarithms are in base 2 unless otherwise stated. For the sake of clarity of presentation, we omit floor and ceiling signs whenever they are not crucial.

\section{The stepping-up lemma and proof of Lemma \ref{step}}

The proof of our main result, Theorem \ref{mainthm}, follows by applying a variant of the classic Erd\H os-Hajnal stepping-up lemma.  In this section, we describe the stepping-up procedure and sketch the proof of Lemma~\ref{step} below which is used to prove Corollary~\ref{cors}. The particular case below can be found in~\cite{MS15c}, though a special case of Lemma~\ref{step}  was communicated to us independently by Conlon, Fox, and Sudakov~\cite{CFS15}.

  \begin{lemma}\label{step}
  For $k\geq 5$ and $n \geq s \geq k + 1$, $r_k(s, 2kn) > 2^{r_{k-1}(s-1,n) - 1}$.

  \end{lemma}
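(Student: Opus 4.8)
The plan is to step up the $(k-1)$-uniform colouring realising $r_{k-1}(s-1,n)$ to a $k$-uniform one via the Erd\H os--Hajnal construction. Put $m=r_{k-1}(s-1,n)-1$ and fix a red/blue colouring $\chi$ of $\binom{[m]}{k-1}$ with no red $K^{(k-1)}_{s-1}$ and no blue $K^{(k-1)}_{n}$; I want a red/blue colouring $\phi$ of $\binom{V}{k}$, where $V=\{0,1\}^{m}$ and $|V|=2^{m}=2^{\,r_{k-1}(s-1,n)-1}$, with no red $K^{(k)}_{s}$ and no blue $K^{(k)}_{2kn}$, which is exactly the lemma. For distinct $u,v\in V$ let $\delta(u,v)$ be the largest coordinate in which $u$ and $v$ differ, and order $V$ so that $u<v$ precisely when $u$ has a $0$ in coordinate $\delta(u,v)$. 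First I would record the standard facts about this order: for $v_{1}<\dots<v_{p}$, writing $\delta_{i}=\delta(v_{i},v_{i+1})$, one has $\delta_{i}\neq\delta_{i+1}$, $\delta(v_{i},v_{j})=\max_{i\le t<j}\delta_{t}$ for $i<j$, and — crucially — the maximum of any block $\delta_{i},\dots,\delta_{j}$ is attained exactly once. I would then define $\phi(\{v_{1}<\dots<v_{k}\})$ as follows: when $(\delta_{1},\dots,\delta_{k-1})$ is strictly monotone (increasing or decreasing) set $\phi=\chi(\{\delta_{1},\dots,\delta_{k-1}\})$; otherwise assign a colour by the Erd\H os--Hajnal rule for non-monotone patterns, which depends only on the arrangement of the ascents and descents of $(\delta_{1},\dots,\delta_{k-1})$ and is engineered so that no $k$-clique all of whose $k$-subsets fall under this rule can be monochromatic beyond a bounded size.

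\textbf{No red $K^{(k)}_{s}$.} Suppose $W=\{w_{1}<\dots<w_{s}\}$ spans a red $K^{(k)}_{s}$ and set $d_{i}=\delta(w_{i},w_{i+1})$. I would first argue that $(d_{1},\dots,d_{s-1})$ is strictly monotone: if it had a ``turn'' (an ascent next to a descent), sliding a window of $k$ consecutive vertices of $W$ across that turn produces $k$-subsets whose $\delta$-patterns differ in their extremum structure, so by the design of the non-monotone rule two consecutive ones get opposite colours, contradicting that all $k$-subsets of $W$ are red; the hypothesis $s\ge k+1$ is used to handle turns near the two ends by a short separate check. Once $(d_{1},\dots,d_{s-1})$ is monotone, every $k$-subset of $W$ lies in the monotone case, its colour equals $\chi$ of its $\delta$-set — a $(k-1)$-subset of $\{d_{1},\dots,d_{s-1}\}$ — and every $(k-1)$-subset of this $(s-1)$-element set arises in this way. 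Hence $\{d_{1},\dots,d_{s-1}\}$ is a red $K^{(k-1)}_{s-1}$ in $\chi$, a contradiction.

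\textbf{No blue $K^{(k)}_{2kn}$.} Suppose $W=\{w_{1}<\dots<w_{2kn}\}$ spans a blue $K^{(k)}_{2kn}$ with $\delta$-sequence $\mathcal D=(d_{1},\dots,d_{2kn-1})$. Feeding the $k$-subsets of $W$ through the non-monotone rule restricts the extremum pattern of $\mathcal D$, and combining this with the unique-maximum property I would extract from $W$ a subset $w_{j_{0}}<\dots<w_{j_{n}}$ of more than $n$ vertices whose successive gaps $e_{i}=\delta(w_{j_{i-1}},w_{j_{i}})$ strictly increase; it is precisely this extraction that loses a factor of at most $2k$ in the number of vertices, which is the source of the $2kn$. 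The values $e_{1}<\dots<e_{n}$ are then distinct, every $k$-subset of $\{w_{j_{0}},\dots,w_{j_{n}}\}$ lies in the monotone case and is blue, so $\chi$ is blue on every $(k-1)$-subset of $\{e_{1},\dots,e_{n}\}$; thus $\{e_{1},\dots,e_{n}\}$ is a blue $K^{(k-1)}_{n}$ in $\chi$, again a contradiction. This would complete the proof.

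\textbf{Main difficulty.} Everything outside the definition of $\phi$ is bookkeeping; the work is in choosing the rule for non-monotone $\delta$-patterns and proving its two key properties — that it is stringent enough to force the $\delta$-sequence of a red $K^{(k)}_{s}$ to be \emph{entirely} monotone (so that the red clique shrinks by only one as the uniformity drops by one), yet flexible enough that the monotone $k$-subsets already present in \emph{any} blue clique of size $2kn$ pin down a blue $K^{(k-1)}_{n}$ in $\chi$ (one cannot afford more than a $2k$ factor here). Both properties have to be read off from the bare combinatorics of the $\delta$-order, in particular from the unique-maximum property, and this is where I expect the real work to lie.
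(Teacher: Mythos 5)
Your overall strategy matches the paper's: step up from a $(k-1)$-uniform coloring $\chi$ of $\{0,\dots,r_{k-1}(s-1,n)-2\}$ to a $k$-uniform coloring of $\{0,1\}^{r_{k-1}(s-1,n)-1}$, color monotone $\delta$-patterns via $\chi$ and handle non-monotone patterns by a separate rule. The red-clique argument you sketch (any local extremum in the $\delta$-sequence forces two nearby $k$-tuples to receive opposite colors, so the $\delta$-sequence must be monotone, giving a red $K^{(k-1)}_{s-1}$) is essentially the paper's. However, the blue-clique argument has a genuine gap.

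You propose to ``extract from $W$ a subset of more than $n$ vertices whose successive gaps strictly increase'' and to derive the contradiction from the resulting blue $K^{(k-1)}_n$, attributing the loss of the factor $2k$ to this extraction without saying how it is performed or why it works. The paper's actual argument is a dichotomy, and the branch you omit is the one that does the real work. Explicitly: one first observes that no $n$ \emph{consecutive} $\delta_i$'s can form a monotone run (else $\chi$ would contain a blue $K^{(k-1)}_n$). This forces the $\delta$-sequence of length $2kn-1$ to have at least $k$ local minima $\delta_{i_1},\dots,\delta_{i_k}$. The crucial step is then structural, not extraction: the $k$ leading vertices of $\{v_{i_1},v_{i_1+1},v_{i_2},v_{i_2+1},\dots\}$ have a $\delta$-pattern that, by Property~II, alternates small (a local minimum) and large (a block maximum), i.e., is a zig-zag with a local maximum in position~$2$. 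By the non-monotone coloring rule this $k$-tuple is \emph{red}, contradicting blueness of the clique. That red zig-zag is the true source of the factor $2k$ (you need enough local minima to assemble a zig-zag of length $k$), and it is exactly what makes your extraction claim true when it is true: a blue clique cannot have $\geq k$ local minima, so with fewer than $k$ local minima among $2kn-1$ differences some run has length $\geq n$. Stated the other way round, you cannot always ``extract'' a long monotone subsequence without first establishing the bound on local minima via the red zig-zag; your proposal treats the extraction as primitive, which is where the argument breaks.

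Two smaller points. First, the non-monotone rule has to be stated precisely: the paper colors a $k$-tuple red precisely when the $\delta$-pattern is the specific zig-zag $\delta_1<\delta_2>\delta_3<\cdots$, and both halves of the proof lean on this exact choice — ``the Erd\H os--Hajnal rule for non-monotone patterns'' is not a well-defined object you can cite. Second, your design principle (``no $k$-clique all of whose $k$-subsets fall under the non-monotone rule can be monochromatic beyond a bounded size'') is not what is needed and not what the rule achieves; the requirement is the asymmetric pair of properties above.
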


\proof Let $k \geq 5$, $n \geq s\geq k + 1$, and set $A  = \{0,1,\ldots, N-1\}$ where $N = r_{k-1}(s-1,n) - 1$.  Let $\phi:{A\choose k-1} \rightarrow \{\textnormal{red, blue}\}$ be a red/blue coloring of the $(k-1)$-tuples of $A$ such that there is no monochromatic red copy of $K_{s-1}^{(k-1)}$ and no monochromatic blue copy of $K_{n}^{(k-1)}$.  We know $\phi$ exists by the definition of $N$.  Set $V = \{0,1,\ldots, 2^N - 1\}$.  In what follows, we will use $\phi$ to define a red/blue coloring $\chi:{V\choose k}\rightarrow \{\textnormal{red, blue}\}$ of the $k$-tuples of $V$ such that $\chi$ does not contain a monochromatic red copy of $K_{s}^{(k)}$, and does not contain a monochromatic blue copy of $K_{2kn}^{(k)}$.

For any $v \in V$, write $v=\sum_{i=0}^{N-1}v(i)2^i$ with $v(i) \in \{0,1\}$ for each $i$. For $u \not = v$, let $\delta(u,v) \in A$ denote the largest $i$ for which $u(i) \not = v(i)$.  Notice that we have the following stepping-up properties (see \cite{graham})

\begin{description}

\item[Property I:] For every triple $u < v < w$, $\delta(u,v) \not = \delta(v,w)$ .

\item[Property II:] For $v_1 < \cdots < v_r$, $\delta(v_1,v_{r}) = \max_{1 \leq j \leq r-1}\delta(v_j,v_{j + 1})$.

\end{description}

We will also use the following two stepping-up properties, which are easy consequences of Properties I and II.

\begin{description}

\item[Property III:] For every 4-tuple $v_1 < \cdots < v_4$, if $\delta(v_1,v_2) > \delta(v_2,v_3)$, then $\delta(v_1,v_2) \neq \delta(v_3,v_4)$.   Note that if $\delta(v_1,v_2) < \delta(v_2,v_3)$, it is possible that $\delta(v_1,v_2) = \delta(v_3,v_4)$.

\item[Property IV:] For $v_1 < \cdots < v_r$, set $\delta_j = \delta(v_j,v_{j + 1})$ and suppose that $\delta_1,\ldots, \delta_{r-1}$ forms a monotone sequence.  Then for every subset of $k$-vertices $v_{i_1},v_{i_2},\ldots, v_{i_k}$, where $v_{i_1} < \cdots < v_{i_k}$, $\delta(v_{i_1},v_{i_2}), \delta(v_{i_2},v_{i_3}),\ldots, \delta(v_{i_{k-1}},v_{i_k})$ forms a monotone sequence.  Moreover, for every subset of $k-1$ vertices $\delta_{j_1},\delta_{j_2},\ldots, \delta_{j_{k-1}}$, there are $k$ vertices $v_{i_1},\ldots, v_{i_k}$ such that $\delta(v_{i_t}, v_{i_{t + 1}}) = \delta_{j_t}$.

\end{description}

Given any $k$-tuple $v_1<v_2<\ldots<v_{k}$ of $V$, consider the integers $\delta_i=\delta(v_i,v_{i+1}), 1\le i\le k-1$. We say that $\delta_i$ is a {\it local minimum} if $\delta_{i-1}>\delta_i<\delta_{i+1}$, a {\it local maximum} if $\delta_{i-1}<\delta_i>\delta_{i+1}$, and a {\it local extremum} if it is either a local minimum or a local maximum.  Since $\delta_{i-1} \not = \delta_i$ for every $i$, every nonmonotone sequence $\delta_1,\ldots,\delta_{k-1}$ has a local extremum.

Using $\phi:{A\choose k-1}\rightarrow \{\textnormal{red, blue}\}$, we define $\chi:{V\choose k}\rightarrow \{\textnormal{red, blue}\}$ as follows.  For $v_1 < \cdots < v_k$ and $\delta_i = \delta(v_i,v_{i + 1})$, we define $\chi(v_1,\ldots, v_k) = $ red if

\begin{enumerate}

\item[(a)] $\delta_1,\ldots, \delta_{k-1}$ forms a monotone sequence and $\phi(\delta_1,\ldots, \delta_{k-1}) = $ red, or if

\item[(b)] $\delta_1,\ldots, \delta_{k-1}$ forms a \emph{zig-zag} sequence such that $\delta_2$ is a local maximum.  In other words, $\delta_1 < \delta_2 > \delta_3 < \delta_4 > \cdots.$

\end{enumerate}

Otherwise $\chi(v_1,\ldots, v_k) = $ blue.

For the sake of contradiction, suppose $\chi$ produces a monochromatic red copy of $K^{(k)}_{s}$ on vertices $v_1 < \cdots < v_{s}$, and let $\delta_i = \delta(v_i,v_{i+1})$.  If $\delta_1,\delta_2,\ldots, \delta_{s-1}$ forms a monotone sequence, then by Property IV, $\phi$ colors every $(k-1)$-tuple in the set $\{\delta_1,\ldots, \delta_{s-1}\}$ red, which is a contradiction.  Let $\delta_{i}$ denote the first local extremum in the sequence $\delta_1,\ldots, \delta_{s-1}$.  It is easy to see that $\delta_{i}$ is a local maximum since otherwise we would get a contradiction.  Suppose $i +k - 1 \leq s$.  If $\delta_{i + 1}$ is not a local extremum, then $\chi(v_{i-1},v_i,v_{i+1},\ldots, v_{i + k - 2}) = $ blue which is a contradiction.  If $\delta_{i + 1}$ is a local extremum, then it must be a local minimum which implies that $\chi(v_i,v_{i+1},\ldots, v_{i + k - 1}) = $ blue, contradiction.  Therefore we can assume that $i + k - 1 > s$, which implies $i\geq 3$ since $s\geq k + 1$.  However, this implies that either $\chi(v_{i-2},v_{i-1},\ldots, v_{i + k - 3}) = $ blue or $\chi(v_{s - k + 1},v_{s - k + 2}, \ldots, v_s) = $ blue, contradiction.  Hence, $\chi$ does not produce a monochromatic red copy of $K^{(k)}_s$ in $V$.

Let $m = 2kn$.  For the sake of contradiction, suppose $\chi$ produces a monochromatic blue copy of $K^{(k)}_{m}$ on vertices $v_1,\ldots, v_m$ and let $\delta_i = \delta(v_i,v_{i+1})$.  By Property IV, there is no $x$ such that $\delta_x,\delta_{x + 1},\ldots, \delta_{x + n-1}$ forms a monotone sequence.  Indeed, otherwise $\phi$ would produce a monochromatic blue copy of $K_{n}^{(k-1)}$ on vertices $\delta_x,\delta_{x + 1},\ldots, \delta_{x + n -1}$.  Therefore, we can set $\delta_{i_1},\ldots, \delta_{i_k}$ to be the first $k$ local minimums in the sequence $\delta_1,\ldots, \delta_{m-1}$.  However, by Property II, $\chi$ colors the first $k$ vertices in the set $\{v_{i_1},v_{i_1 + 1},v_{i_2},v_{i_2 + 1},\ldots, v_{i_k},v_{i_k + 1}\}$ red which is a contradiction.  This completes the proof of Lemma \ref{step}.\qed

\section{A double exponential lower bound for $r_4(6,n)$}\label{clique}

The lower bound for $r_4(6,n)$ follows by applying a variant the Erd\H os-Hajnal stepping up lemma.   We start with the following simple lemma which is a straightforward application of the probabilistic method.

 \begin{lemma}\label{start}
There is an absolute constant $c>0$ such that the following holds.  For every $n\geq 6$, there is a red/blue coloring $\phi$ of the pairs of $\{0,1,\ldots, \lfloor 2^{cn}\rfloor - 1\}$ such that

\begin{enumerate}

\item there are no two disjoint $n$-sets $A,B\subset \{0,1,\ldots, \lfloor 2^{cn}\rfloor - 1\}$, such that $\phi(a,b) =$ red for every $a\in A$ and $b\in B$, or $\phi(a,b) =$ blue for every $a \in A$ and $b \in B$ (i.e., no monochromatic $K_{n,n}$),

\item there is no $n$-set $A\subset \{0,1,\ldots, \lfloor 2^{cn}\rfloor - 1\}$  such that every triple $a_i,a_j,a_k \in A$, where $a_i < a_j < a_k$, avoids the pattern $\phi(a_i,a_j) =\phi (a_j,a_k) = blue$ and $\phi(a_i,a_k) = red$.

\end{enumerate}

 \end{lemma}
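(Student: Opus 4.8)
The plan is a direct application of the probabilistic method with a union bound over the two types of forbidden configurations. Fix $N = \lfloor 2^{cn}\rfloor$ and let $\phi$ be a uniformly random red/blue coloring of the pairs of $\{0,1,\ldots,N-1\}$, each pair colored independently with probability $1/2$ each. We want to show that with positive probability neither bad event occurs, provided $c>0$ is chosen small enough.

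First I would bound the probability of a monochromatic $K_{n,n}$. For a fixed ordered pair of disjoint $n$-sets $(A,B)$, the $n^2$ pairs $\{a,b\}$ with $a\in A$, $b\in B$ are distinct, so the probability they are all red is $2^{-n^2}$, and likewise for all blue; thus the probability of a monochromatic $K_{n,n}$ on $(A,B)$ is $2\cdot 2^{-n^2}$. The number of ways to choose $A$ and $B$ is at most $\binom{N}{n}^2 \le N^{2n} = 2^{2cn\cdot n} = 2^{2cn^2}$. Hence the expected number of monochromatic copies of $K_{n,n}$ is at most $2\cdot 2^{2cn^2}\cdot 2^{-n^2} = 2^{1-(1-2c)n^2}$, which tends to $0$ as $n\to\infty$ as long as $c < 1/2$.

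Next I would bound the probability that condition (2) fails. Call a triple $a_i<a_j<a_k$ \emph{bad} if $\phi(a_i,a_j)=\phi(a_j,a_k)=\text{blue}$ and $\phi(a_i,a_k)=\text{red}$; condition (2) asks for an $n$-set $A$ with no bad triple. For a fixed $n$-set $A$ with elements $a_1<\cdots<a_n$, I want an upper bound on the probability that $A$ contains no bad triple. The natural way is to expose the pairs in a suitable order and use independence: consider the ``tournament-like'' structure and count, for instance, restricting to the pairs $\{a_1,a_j\}$ for $j\ge 2$ and $\{a_j,a_n\}$ for $j\le n-1$ — actually cleaner is to look at consecutive-type triples. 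The cleanest approach: partition the indices $\{2,\ldots,n-1\}$ and for each such $j$ consider the event $E_j$ that $\phi(a_1,a_j)=\text{blue}$, $\phi(a_j,a_n)=\text{blue}$... no; instead, group the elements into $(n-1)/2$ disjoint triples $\{a_{3t+1},a_{3t+2},a_{3t+3}\}$. These triples involve pairwise disjoint pairs, each triple is bad independently with probability $1/8$, so the probability that none of these $\approx n/3$ triples is bad is at most $(7/8)^{n/3}$. Therefore the probability that some $n$-set has no bad triple is at most $\binom{N}{n}(7/8)^{n/3} \le 2^{cn^2}(7/8)^{n/3}$. This is a losing bound: $2^{cn^2}$ beats $(7/8)^{n/3}$ for any fixed $c>0$.

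So the main obstacle is exactly here: a trivial union bound over all $n$-sets against an exponentially (not doubly-exponentially) small event probability does not close. The fix — and the step I expect to require the real work — is to get a \emph{doubly} exponentially small bound on the probability that a fixed $n$-set is bad-triple-free, or equivalently to show that bad-triple-free sets have size only $O(\log N / \log\log N)$ or so with overwhelming probability, which is false in the wrong direction, OR to re-read the intended statement: in fact condition (2) should be bounded using many \emph{independent} constraints inside a single $n$-set. Concretely, inside an $n$-set one can find $\Omega(n^2)$ pairwise edge-disjoint triples (a near-perfect triangle decomposition of the ``lower-left staircase'', e.g. triples $\{a_i,a_j,a_k\}$ chosen greedily so no pair repeats — one gets $\sim n^2/6$ such triples). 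Each is bad independently with probability $1/8$, so the no-bad-triple probability is at most $(7/8)^{n^2/6}$, and then $\binom{N}{n}(7/8)^{n^2/6} \le 2^{cn^2}(7/8)^{n^2/6}\to 0$ provided $c < \tfrac16\log_2\tfrac87$. Hence the real content is establishing that an $n$-set contains $\Omega(n^2)$ pairwise-edge-disjoint ordered triples of the prescribed shape; this is an elementary combinatorial decomposition argument (partition the $n$ points into $\sqrt n$ blocks of $\sqrt n$ consecutive points and take triples with one point in an early block, one in a middle block, one in a late block, arranging disjointness of pairs). Finally, choosing $c$ smaller than the minimum of the thresholds from the two parts makes the sum of the two failure probabilities less than $1$ for all large $n$, and by adjusting $c$ we cover all $n\ge 6$; a coloring avoiding both bad events then exists, completing the proof.
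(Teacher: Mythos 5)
Your proposal is correct and is essentially the paper's own argument: a uniformly random coloring, the bound $\binom{N}{n}^2 2^{1-n^2}$ for monochromatic $K_{n,n}$, and the observation that an $n$-set contains $\Theta(n^2)$ pairwise edge-disjoint triples so that the probability of no bad triple is at most $(7/8)^{\Omega(n^2)}$, which beats $\binom{N}{n}\le 2^{cn^2}$ for $c$ small. The only cosmetic difference is that the paper obtains the $\Theta(n^2)$ edge-disjoint triples by citing the existence of a partial Steiner $(n,3,2)$-system, whereas you sketch a direct greedy/block construction; both are standard and yield the same bound.
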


\begin{proof}

 Set $N = \lfloor 2^{cn}\rfloor$, where $c$ is a sufficiently small constant that will be determined later.  Consider the red/blue coloring $\phi$ of the pairs (edges) of $\{0,1,\ldots, N-1\}$, where each edge has probability $1/2$ of being a particular color independent of all other edges.  Then the expected number of monochromatic copies of the complete bipartite graph $K_{n,n}$ is at most

 $${N\choose n}^22^{-n^2 + 1} < 1/3,$$

 for $c$ sufficiently small and $n\geq 6$.

 We call a triple $a_i,a_j,a_k \in \{0,1,\ldots, N - 1\}$ \emph{bad} if $a_i < a_j < a_k$ and $\phi(a_i,a_j) =\phi (a_j,a_k) =$ blue and $\phi(a_i,a_k) =$ red.  Otherwise, we call the triple $(a_i,a_j,a_k)$ \emph{good}.  Now, let us estimate the expected number of sets $A \subset \{0,1,\ldots, N - 1\}$ of size $n$ such that every triple in $A$ is good.   For a given triple $a_i,a_j,a_k \in \{0,1,\ldots, N - 1\}$, where $a_i < a_j < a_k$, the probability that $(a_i,a_j,a_k)$ is good is 7/8.  Let $A = \{a_1,\ldots, a_n\}$ be a set of $n$ vertices in $\{0,1,\ldots, N - 1\}$, where $a_1 < \cdots < a_n$.  Let $S$ be a partial Steiner $(n,3,2)$-system with vertex set $A$, that is, $S$ is a 3-uniform hypergraph such that each $2$-element set of vertices is contained in at most one edge in $S$.  Moreover, $S$ satisfies $|S| = c'n^{2}$.  It is known that such a system exists. Then the probability that every triple in $A$ is good is at most the probability that every edge in $S$ is good.  Since the edges in $S$ are independent, that is no two edges have more than one vertex in common, the probability that every triple in $A$ is good is at most $\left( \frac{7}{8}\right)^{|S|} \leq \left(\frac{7}{8}\right)^{c'n^{2}}$.   Therefore, the expected number of sets of size $n$ with every triple being good is at most

$${N\choose n}\left( \frac{7}{8}\right)^{c'n^{2}} < 1/3,$$

\noindent for an appropriate choice for $c$.  By Markov's inequality and the union bound, we can conclude that there is a coloring $\phi$ with the desired properties.  \end{proof}

Let $c> 0$ be the constant from the lemma above, and let $A = \{0,1,\ldots, \lfloor 2^{cn}\rfloor-1\}$ and $\phi:{A\choose 2}\rightarrow \{\textnormal{red, blue}\}$ be a 2-coloring of the pairs of $A$ with the properties described above.  Let $V = \{0,1,\ldots, N-1\}$, where $N  = 2^{\lfloor 2^{cn}\rfloor}$.  In what follows, we will use $\phi$ to define a red/blue coloring $\chi:{V\choose 4}\rightarrow \{\textnormal{red, blue}\}$ of the 4-tuples of $V$ such that $\chi$ does not produce a monochromatic red copy of $K^{(4)}_6$ and does not produce a monochromatic blue copy of $K^{(4)}_{32n^5}$.  This would imply the desired lower bound for $r_4(6,n)$.   For $v_1 < v_2 < v_3 < v_4$ and $\delta_i = \delta(v_i,v_{i + 1})$, we set $\chi(v_1,v_2,v_3,v_4) = $ red if

\begin{enumerate}

\item[(a)] $\delta_1,\delta_2,\delta_3$ forms a monotone sequence and the triple $(\delta_1,\delta_2,\delta_3)$ is \emph{bad}, that is, $\phi(\delta_1,\delta_2) = \phi(\delta_2,\delta_3) = $ blue and $\phi(\delta_1,\delta_3) = $ red, or

\item[(b)] $\delta_1 < \delta_2 > \delta_3$ and $\delta_1 = \delta_3$, or

\item[(c)]  $\delta_1 < \delta_2 > \delta_3$, $\delta_1 \neq \delta_3$, and the set $\{\delta_1,\delta_2,\delta_3\}$ is monochromatic with respect to $\phi$, or

\item[(d)] $\delta_1 > \delta_2 < \delta_3$, $\delta_1 < \delta_3$, and $\phi(\delta_3,\delta_1) = \phi(\delta_3,\delta_2)$ and $\phi(\delta_1,\delta_2) \neq \phi(\delta_3,\delta_1)$.

\end{enumerate}

 \noindent See Figure \ref{redex1} for small examples. Otherwise, $\chi(v_1,v_2,v_3,v_4)$ =  blue.

\def\r6nRED1{
\begin{tikzpicture}
  \node at (0,0)  {$v_4$:};
  \node at (0.5,0)  {};
  \node at (1,0)  {1};
  \node at (1.5,0)  {1};
  \node at (2,0)  {1};
  \node at (0,0.5) {$v_3$:};
  \node at (0.5,0.5)  {};
  \node at (1,0.5)  {1};
  \node at (1.5,0.5)  {1};
  \node at (2,0.5)  {0};
  \node at (0,1)  {$v_2$:};
  \node at (0.5,1)  {};
  \node at (1,1)  {1};
  \node at (1.5,1)  {0};
  \node at (2,1)  {0};
  \node at (0,1.5)  {$v_1$:};
  \node at (0.5,1.5)  {};
  \node at (1,1.5)  {0};
  \node at (1.5,1.5)  {0};
  \node at (2,1.5)  {0};
  \node at (0,2)  {};
  \node at (0.5,2) {};

  \node at (1,2) [vertex] {};
\node at (1,2.35)  {$\delta_1$};

\node at (1.5,2.35)  {$\delta_2$};

\node at (2,2.35)  {$\delta_3$};
  \node at (1.5,2) [vertex] {};
  \node at (2,2) [vertex] {};

    \draw[ultra thick, blue]  (1,2) to[out=0, in=180] (3/2,2);
    \draw[ultra thick, blue]  (3/2,2) to[out=0, in=180] (2,2);
    \draw[ultra thick, red]  (1,2) to[out=-30, in=-150] (2,2);

  \node at (3,0)  {1};
  \node at (3.5,0)  {1};
  \node at (4,0)  {1};

  \node at (3,0.5)  {0};
  \node at (3.5,0.5)  {1};
  \node at (4,0.5)  {1};

   \node at (3,1)  {0};
  \node at (3.5,1)  {0};
   \node at (4,1)  {1};

    \node at (3,1.5)  {0};
    \node at (3.5,1.5)  {0};
    \node at (4,1.5)  {0};

  \node at (3,2) [vertex] {};
\node at (3,2.35)  {$\delta_3$};

\node at (3.5,2.35)  {$\delta_2$};

\node at (4,2.35)  {$\delta_1$};
  \node at (3.5,2) [vertex] {};
  \node at (4,2) [vertex] {};

    \draw[ultra thick, blue]  (3,2) to[out=0, in=180] (7/2,2);
    \draw[ultra thick, blue]  (7/2,2) to[out=0, in=180] (4,2);
    \draw[ultra thick, red]  (3,2) to[out=-30, in=-150] (4,2);

     \node at (2.5, -1) {(a)};

  \node at (5.5,0)  {1};
  \node at (6,0)  {1};
  \node at (6.5,0)  {};

  \node at (5.5,0.5)  {1};
  \node at (6,0.5)  {0};

   \node at (5.5,1)  {0};
  \node at (6,1)  {1};
   \node at (6.5,1)  {};

    \node at (5.5,1.5)  {0};
    \node at (6,1.5)  {0};
    \node at (6.5,1.5)  {};

  \node at (5.5,2) [vertex] {};
\node at (5.25,2.35)  {$\delta_2$};

\node at (6.25,2.35)  {$\delta_1$=$\delta_3$};
  \node at (6,2) [vertex] {};
\draw[ultra thick, black]  (5.5,2) to[out=0, in=180] (6,2);

     \node at (5.75,-1)  {(b)};

  \node at (7.5,0)  {1};
  \node at (8,0)  {1};
  \node at (8.5,0)  {0};

  \node at (7.5,0.5)  {1};
  \node at (8,0.5)  {0};
  \node at (8.5,0.5)  {1};

   \node at (7.5,1)  {0};
  \node at (8,1)  {0};
   \node at (8.5,1)  {1};

    \node at (7.5,1.5)  {0};
    \node at (8,1.5)  {0};
    \node at (8.5,1.5)  {0};

  \node at (7.5,2) [vertex] {};
\node at (7.5,2.35)  {$\delta_2$};

\node at (8,2.35)  {$\delta_3$};

\node at (8.5,2.35)  {$\delta_1$};
  \node at (8,2) [vertex] {};
  \node at (8.5,2) [vertex] {};

    \draw[ultra thick, black]  (7.5,2) to[out=0, in=180] (8,2);
    \draw[ultra thick, black]  (8,2) to[out=0, in=180] (8.5,2);
    \draw[ultra thick, black]  (7.5,2) to[out=-30, in=-150] (8.5,2);

  \node at (9.5,0)  {1};
  \node at (10,0)  {1};
  \node at (10.5,0)  {1};

  \node at (9.5,0.5)  {1};
  \node at (10,0.5)  {1};
  \node at (10.5,0.5)  {0};

   \node at (9.5,1)  {0};
  \node at (10,1)  {1};
   \node at (10.5,1)  {1};

    \node at (9.5,1.5)  {0};
    \node at (10,1.5)  {0};
    \node at (10.5,1.5)  {1};

  \node at (9.5,2) [vertex] {};
\node at (9.5,2.35)  {$\delta_2$};

\node at (10,2.35)  {$\delta_1$};

\node at (10.5,2.35)  {$\delta_3$};
  \node at (10,2) [vertex] {};
  \node at (10.5,2) [vertex] {};

    \draw[ultra thick, black]  (9.5,2) to[out=0, in=180] (10,2);
    \draw[ultra thick, black]  (10,2) to[out=0, in=180] (10.5,2);
    \draw[ultra thick, black]  (9.5,2) to[out=-30, in=-150] (10.5,2);

     \node at (9,-1)  {(c)};

  \node at (12,0)  {1};
  \node at (12.5,0)  {0};
  \node at (13,0)  {0};

  \node at (12,0.5)  {0};
  \node at (12.5,0.5)  {1};
  \node at (13,0.5)  {1};

   \node at (12,1)  {0};
  \node at (12.5,1)  {1};
   \node at (13,1)  {0};

    \node at (12,1.5)  {0};
    \node at (12.5,1.5)  {0};
    \node at (13,1.5)  {1};

  \node at (12,2) [vertex] {};
\node at (12,2.35)  {$\delta_3$};

\node at (12.5,2.35)  {$\delta_1$};

\node at (13,2.35)  {$\delta_2$};
  \node at (12.5,2) [vertex] {};
  \node at (13,2) [vertex] {};

    \draw[ultra thick, blue]  (12,2) to[out=0, in=180] (12.5,2);
    \draw[ultra thick, red]  (12.5,2) to[out=0, in=180] (13,2);
    \draw[ultra thick, blue]  (12,2) to[out=-30, in=-150] (13,2);

  \node at (14,0)  {1};
  \node at (14.5,0)  {0};
  \node at (15,0)  {0};

  \node at (14,0.5)  {0};
  \node at (14.5,0.5)  {1};
  \node at (15,0.5)  {1};

   \node at (14,1)  {0};
  \node at (14.5,1)  {1};
   \node at (15,1)  {0};

    \node at (14,1.5)  {0};
    \node at (14.5,1.5)  {0};
    \node at (15,1.5)  {1};

  \node at (14,2) [vertex] {};
\node at (14,2.35)  {$\delta_3$};

\node at (14.5,2.35)  {$\delta_1$};

\node at (15,2.35)  {$\delta_2$};
  \node at (14.5,2) [vertex] {};
  \node at (15,2) [vertex] {};

    \draw[ultra thick, red]  (14,2) to[out=0, in=180] (14.5,2);
    \draw[ultra thick, blue]  (14.5,2) to[out=0, in=180] (15,2);
    \draw[ultra thick, red]  (14,2) to[out=-30, in=-150] (15,2);

     \node at (13.5,-1)  {(d)};

      \end{tikzpicture}}

\begin{figure}
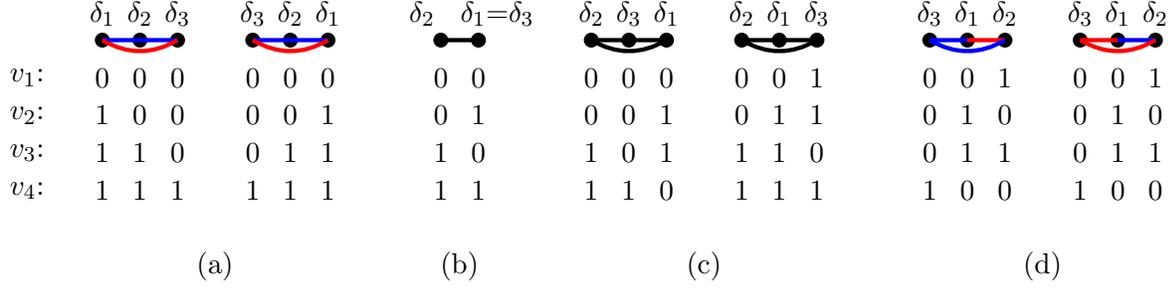

      {\r6nRED1} \caption{Examples of $v_1 < v_2 < v_3 < v_4$ and $\delta_1 = \delta(v_1,v_2), \delta_2 = \delta(v_2,v_3), \delta_3 = \delta(v_3,v_4)$, such that $\chi(v_1,v_2,v_3,v_4) = $ red.  For each case, $v_i$ is represented in binary form with the left-most entry being the most significant bit.}\label{redex1}
  \end{figure}

For the sake of contradiction, suppose that the coloring $\chi$ produces a red $K_6^{(4)}$ on vertices $v_1 < \cdots < v_6$, and let $\delta_i = \delta(v_i,v_{i + 1})$, $1 \leq i \leq 5$.  Let us first consider the following cases for $\delta_1,\ldots, \delta_4$, which corresponds to the vertices, $v_1,\ldots, v_5$.

\medskip

\emph{Case 1}.  Suppose that $\delta_1,\ldots, \delta_{4}$ forms a monotone sequence.  If $\delta_1 > \cdots > \delta_4$, then we have $\phi(\delta_1,\delta_3) = $ red since $\chi(v_1,v_2,v_3,v_4) = $ red.  However, this implies that $\chi(v_1,v_3,v_4,v_5) = $ blue since $\delta(v_1,v_3) = \delta_1$ by Property II, contradiction.  A similar argument follows if $\delta_1 < \cdots < \delta_4$.

\medskip

\emph{Case 2}.  Suppose $\delta_1 > \delta_2 > \delta_3 < \delta_4$.  By Property III, $\delta_4 \neq \delta_2,\delta_1$.  Since $\delta_1 > \delta_2 > \delta_3$, this implies that $\phi(\delta_1,\delta_2) = \phi(\delta_2,\delta_3) = $ blue and $\phi(\delta_1,\delta_3) = $ red.  Since $\delta(v_2,v_4) = \delta_2$ and $\chi(v_1,v_2,v_4,v_5) = $ red, we have $\delta_4 > \delta_1$.  Hence $\phi(\delta_4,\delta_3) = \phi(\delta_4,\delta_2) =$ red.  However, since $\delta(v_1,v_3) = \delta_1$ by Property II, we have $\chi(v_1,v_3,v_4,v_5)$ is blue, contradiction.

\medskip

  \emph{Case 3.} Suppose $\delta_1 < \delta_2 < \delta_3 > \delta_4$.   This implies that $\phi(\delta_1,\delta_2) = \phi(\delta_2,\delta_3) = $ blue and $\phi(\delta_1,\delta_3) = $ red.  Suppose $\delta_4 = \delta_2$.  Since $\delta(v_1,v_3)  = \delta_2$ and $\delta_2 < \delta_3 > \delta_4$, this implies the triple $(\delta_1,\delta_2,\delta_4)$ forms a monochromatic blue set with respect to $\phi$, which is a contradiction.  A similar argument follows in the case that $\delta_4 = \delta_1$.  So we can assume $\delta_4 \neq \delta_1,\delta_2$. Since $\chi(v_2,v_3,v_4,v_5) = $ red, the triple $\{\delta_2,\delta_3,\delta_4\}$ forms a monochromatic blue set with respect to $\phi$.  By Property II we have $\delta(v_2,v_4) = \delta_3$ and $\delta_1 < \delta(v_2,v_4) > \delta_4$.  This implies that $\chi(v_1,v_2,v_4,v_5) = $ blue, contradiction

  \medskip

\emph{Case 4}.  Suppose $\delta_1 < \delta_2 > \delta_3 > \delta_4$.   This implies that $\phi(\delta_2,\delta_3) = \phi(\delta_3,\delta_4) = $ blue and $\phi(\delta_2,\delta_4) = $ red.  Suppose $\delta_1 = \delta_3$.  By Property II, we have $\delta(v_2,v_4) = \delta_2$.  However, $\chi(v_1,v_2,v_4,v_5) = $ red and $\delta_1 < \delta_2 > \delta_4$ implies that the triple $(\delta_1,\delta_2,\delta_4)$ must form a monochromatic set with respect to $\phi$, contradiction.  A similar argument follows if $\delta_1 = \delta_4$.  Therefore, we can assume that $\delta_1 \neq \delta_3,\delta_4$.  Since $\chi(v_1,v_2,v_3,v_4) = $ red, the triple $(\delta_1,\delta_2,\delta_3)$ forms a monochromatic blue set with respect to $\phi$.   By Property II we have $\delta(v_2,v_4) = \delta_2$ and $\delta_1 < \delta(v_2,v_4) > \delta_4$.  This implies $\chi(v_1,v_2,v_4,v_5) = $ blue, contradiction.

\medskip

\emph{Case 5}.  Suppose $\delta_1 > \delta_2 < \delta_3 < \delta_4$.  Note that by Property III, $\delta_1 \neq \delta_3,\delta_4$.   Since $\delta_1,\delta_2,\delta_3$ forms a monotone sequence, this implies that $\phi(\delta_2,\delta_3) = \phi(\delta_3,\delta_4) = $ blue and $\phi(\delta_2,\delta_4) = $ red.  Moreover, we must have $\delta_1 < \delta_3$ since $\chi(v_1,v_2,v_3,v_4) = $ red.  Hence $\phi(\delta_3,\delta_1) = $ blue and $\phi(\delta_1,\delta_2) = $ red.  However, since $\delta(v_3,v_5) = \delta_4$, we have $\delta_1 > \delta_2 < \delta(v_3,v_5)$ and $\chi(v_1,v_2,v_3,v_5) = $ blue, contradiction.

\emph{Case 6}.  Suppose $\delta_1 < \delta_2 >  \delta_3 < \delta_4$.  Then we must also have $\delta_4 > \delta_2$ since $\chi(v_2,v_3,v_4,v_5) = $ red.  By Property II, $\delta(v_3,v_5) = \delta_4$ and we have $\delta_1 < \delta_2 < \delta(v_3,v_5)$.  Since $\chi(v_1,v_2,v_3,v_5) = $ red, we have $\phi(\delta_1,\delta_2) = \phi(\delta_2,\delta_4) = $ blue and $\phi(\delta_1,\delta_4) = $ red.  Since $\chi(v_1,v_2,v_3,v_4) = $ red, the triple $(\delta_1,\delta_2,\delta_3)$ forms a monochromatic blue set.  However, this implies that $\chi(v_2,v_3,v_4,v_5) = $ blue, contradiction.

\medskip

Now if $v_1,\ldots, v_5$ and $\delta_1,\ldots, \delta_4$ does not fall into one of the 6 cases above, then we must have $\delta_1 > \delta_2 < \delta_3 > \delta_4$.  However, this implies that $v_2,\ldots, v_6$ and $\delta_2,\ldots, \delta_5$ does fall into one of the 6 cases above, which implies our contradiction.  Therefore, $\chi$ does not produce a monochromatic red copy of $K^{(4)}_6$ in our 4-uniform hypergraph.

Next we show that there is no blue $K_m^{(4)}$ in coloring $\chi$, where $m = 32n^5$.  For the sake of contradiction, suppose we have vertices $v_1,\ldots, v_m \in V$ such that $v_1 < \cdots < v_m$, and $\chi$ colors every $4$-tuple in the set $\{v_1, \ldots, v_m\}$ blue.  Let $\delta_i = \delta(v_i,v_{i + 1})$ for $1\leq i \leq m - 1$.

Set $\delta^{\ast}_1 = \max\{\delta_1,\ldots, \delta_m\}$, where $\delta^{\ast}_1 = \delta(v_{i_1},v_{i_1 + 1})$.  Set $$V_1 = \{v_1,v_2,\ldots, v_{i_1}\} \hspace{.5cm}\textnormal{and}\hspace{.5cm} V_2 = \{v_{i_1 + 1},v_{i_1 + 2}, \ldots, v_{m}\}.$$  Now we establish the following lemma.

\begin{lemma}\label{ststart}

For any $W\subset \{v_1,\ldots, v_m\}$, where $W = W_1\cup W_2$ is a partition of $W$ described as above, either $|W_1| < m/2n$ or $|W_2| < m/2n$.  In particular,  either $|V_1| < m/2n$ or $|V_2| < m/2n$.

\end{lemma}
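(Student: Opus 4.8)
The plan is to argue by contradiction: assume $|W_1|\ge m/2n$ and $|W_2|\ge m/2n$, and then derive one of three impossibilities — a red $4$-tuple inside $W$ (ruled out since $W\subseteq\{v_1,\dots,v_m\}$ and all $4$-tuples there are blue), a monochromatic $K_{n,n}$ in $\phi$ (ruled out by condition~(1) of Lemma~\ref{start}), or an $n$-set of $A$ none of whose triples is ``bad'' (ruled out by condition~(2) of Lemma~\ref{start}). Write $W=\{w_1<\dots<w_t\}$, $W_1=\{w_1,\dots,w_j\}$, $W_2=\{w_{j+1},\dots,w_t\}$, and let $\eta^{\ast}=\delta(w_j,w_{j+1})$, the largest value in the ``reduced sequence'' $\delta(w_i,w_{i+1})$, $1\le i\le t-1$, of $W$.

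First I would record a handful of structural facts about $\delta$. Using Property~II together with a short uniqueness argument from Property~I, $\eta^{\ast}$ is the \emph{unique} maximum of $\delta(v_a,v_b)$ over all $v_a<v_b$ in the span of $W$; hence $\delta(a,b)=\eta^{\ast}$ whenever $a\in W_1$ and $b\in W_2$, while $\delta(a,a')<\eta^{\ast}$ whenever $a,a'$ lie on the same side. Next, the reduced sequence of $W_1$ (and likewise of $W_2$) has pairwise distinct entries: a repeated value would, via Properties~I and III, force a $4$-tuple inside $W_1$ of label pattern $(z,w,z)$ with $z<w$, which is red by rule~(b). Thus the label sets $D_1,D_2$ (the values occurring in the two reduced sequences) satisfy $|D_i|=|W_i|-1\ge m/2n-1$, and one further application of rule~(b) to a $4$-tuple with two vertices on each side gives $D_1\cap D_2=\varnothing$. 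Finally, neither reduced sequence contains a monotone run of length $n$: by Property~IV applied to the $n+1$ vertices carrying such a run, every triple of its $n$ values would be realized as the (monotone) label triple of a blue $4$-tuple, hence non-bad by rule~(a) — an $n$-set of $A$ with no bad triple, contradicting~(2).

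Then I would turn to the cross $4$-tuples. For $a_1<a_2$ in $W_1$ and $b_1<b_2$ in $W_2$ the label triple is $(x,\eta^{\ast},y)$ with $x=\delta(a_1,a_2)\in D_1$, $y=\delta(b_1,b_2)\in D_2$, $x\ne y$, and $\eta^{\ast}>x,y$; since this $4$-tuple is blue, rules~(b) and~(c) force $\{x,\eta^{\ast},y\}$ not to be $\phi$-monochromatic. Colour each $D_i$ by $x\mapsto\phi(x,\eta^{\ast})$. If $D_1$ and $D_2$ each contain $\ge n$ elements of a common colour $c$, non-monochromaticity forces $\phi$ to colour all cross pairs with the opposite colour, producing a monochromatic $K_{n,n}$ and contradicting~(1). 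Since $|D_i|\ge m/2n-1\ge 2n$, the only remaining possibility — up to a symmetric case obtained by swapping colours — is that all but fewer than $n$ elements of $D_1$ satisfy $\phi(\cdot,\eta^{\ast})=\text{blue}$ and all but fewer than $n$ elements of $D_2$ satisfy $\phi(\cdot,\eta^{\ast})=\text{red}$. Write $B_1\subseteq D_1$ for this large ``blue-biased'' set, so $|B_1|\ge m/2n-n-1\ge n$.

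The hard part is exactly this last configuration, where the colourings $\phi(\cdot,\eta^{\ast})$ are nearly constant and of opposite colours on the two sides, so the cross $4$-tuples yield nothing more; here one works entirely inside $W_1$. Rule~(d), applied to a $4$-tuple formed by three vertices $w_a<w_b<w_c$ of $W_1$ that realize a descending label pair $p>q$ with $p,q\in B_1$ together with one vertex of $W_2$ (label triple $(p,q,\eta^{\ast})$, a valley with $p<\eta^{\ast}$), uses $\phi(\eta^{\ast},p)=\phi(\eta^{\ast},q)=\text{blue}$ to conclude the $4$-tuple is red unless $\phi(p,q)=\text{blue}$; hence every descending label pair realizable inside $W_1$ with both endpoints in $B_1$ is $\phi$-blue. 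Since the reduced sequence of $W_1$ is long, has distinct entries, and has no monotone run of length $n$, it has many descents, and — discarding the fewer than $n$ labels outside $B_1$ — I would extract from $B_1$ either $\ge n$ elements whose pairs are all realized as such descending pairs, giving a $\phi$-blue $K_n$ and hence an $n$-set with no bad triple (contradicting~(2)), or, when the sequence is too ``increasing'' for this, a monotone realizable bad triple (red by rule~(a)) or an outright contradiction from the no-long-run bound. I expect this extraction, and the bookkeeping of the polynomial losses it requires, to be the technically heaviest step; it is what forces the choice $m=32n^5$.
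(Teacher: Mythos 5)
Your preliminary structure (distinct reduced labels, $D_1\cap D_2=\varnothing$, no consecutive monotone run of length $n$, the cross-edge argument via rule~(c), the opposite $\phi$-biases of $D_1$ and $D_2$ toward $\eta^{\ast}$, and the rule-(d) fact that every realizable descent inside $W_1$ with both labels in $B_1$ is $\phi$-blue) is correct, and it coincides with the paper's argument up to a colour swap (the paper takes the red-biased side $A_1^r$; rules (c) and (d) are colour-symmetric so this is harmless). The problem is the final extraction, which you flag as ``heavy bookkeeping'' but which is in fact a missing idea. Knowing only that realizable descents in $B_1$ are $\phi$-blue and that there is no consecutive monotone run of length $n$ does not allow you to find $n$ labels of $B_1$ that are \emph{pairwise} realizable as descents. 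Such a set comes from a greedy bisection by successive maxima, and to reach depth about $2n$ from a block of size $O(n^4)$ you need one side of every split to be small --- but that is exactly what Lemma~\ref{ststart} asserts, and it cannot be invoked here. Without such a bound, the bisection may split evenly, and the resulting chain has length only $O(\log n)$. Neither fallback you mention helps: a ``monotone realizable bad triple'' is not forced by anything you have established (a monotone triple of labels need not be bad), and ``contradiction from the no-long-run bound'' is the bound you already used, not a new one.

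The paper closes this gap with Observation~\ref{ul}: after passing to a consecutive block $U$ all of whose labels are (say) red to $\delta^{\ast}_1$, at each bisection point $\delta^{\ast}_t$ one of the two sides has fewer than $2n^2$ vertices, so the bisection runs for $2n$ steps and the endgame goes through. Its proof is precisely the idea your proposal lacks: it uses \emph{two} reference points carrying opposite colours on the left sub-block. Rule~(d) against $\delta^{\ast}_1$ forces every label on the right of $\delta^{\ast}_t$ to be red to $\delta^{\ast}_t$; a monochromatic $K_{n,n}$ then forces all but fewer than $n$ labels on the left to be blue to $\delta^{\ast}_t$; and then any consecutive descent $\delta_r>\delta_{r+1}$ on the left would be forced blue by rule~(d) against $\delta^{\ast}_t$ and simultaneously red by rule~(d) against $\delta^{\ast}_1$, which is impossible --- so a stretch of length $n$ on the left is strictly increasing, and rule~(a) with Lemma~\ref{start}(2) finishes it. Your single-reference application of rule~(d) is the right first move, but without the second reference $\delta^{\ast}_t$ there is no mechanism turning ``realizable descents are blue'' into the bisection bound the argument needs.
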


Before we prove Lemma \ref{ststart}, let us finish the argument that $\chi$ does not color every 4-tuple in the set $\{v_1,\ldots, v_m\}$ blue via the following lemma which will also be used later in the paper.

\begin{lemma}\label{ststart2}
If Lemma~\ref{ststart} holds, then $\chi$ colors a 4-tuple in the set $\{v_1,\ldots, v_m\}$ red.
\end{lemma}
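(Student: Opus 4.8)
The plan is to iterate Lemma~\ref{ststart}: starting from $\{v_1,\dots,v_m\}$, repeatedly split the current vertex set at the edge realizing its largest $\delta$-value and keep the larger part, recording at each step the vertex on the discarded side incident to that maximal edge, together with its $\delta$-value. Because each discarded part has fewer than $m/2n$ vertices, after $T=2n-1$ splits the surviving set is still nonempty — in fact has more than $m/2n = 16n^4$ vertices, using $m=32n^5$ — and, by pigeonhole, in at least $n$ of the splits the discarded part lay on the same side. The corresponding $n$ recorded $\delta$-values form a strictly monotone sequence which is realized as the consecutive-pair $\delta$-sequence of an explicit $4$-tuple, and property~(2) of Lemma~\ref{start} forces a ``bad'' triple inside it; by rule~(a) in the definition of $\chi$, that $4$-tuple is red.

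\textbf{The iteration.} Set $I_0=\{v_1,\dots,v_m\}$, and for $j\ge 1$ let $I_j$ be the larger of the two parts produced by splitting $I_{j-1}$ at the edge realizing $\max\{\delta(v_i,v_{i+1}):v_i,v_{i+1}\in I_{j-1}\}$; Lemma~\ref{ststart} says the smaller part has fewer than $m/2n$ vertices, so by induction $|I_j|>m-j\cdot m/2n$. Hence every $I_{j-1}$ with $j\le T=2n-1$ has more than $m/n>2$ vertices, so the split is well defined and exactly one of the two parts is the small one, and $|I_T|>m/2n$. Write $I_j=[l_j,r_j]$ for the index interval, so $l_0\le l_1\le\cdots$ and $r_0\ge r_1\ge\cdots$; call step $j$ a \emph{right-step} if $l_j=l_{j-1}$ (the discarded part lies to the right, at indices $r_j+1,\dots,r_{j-1}$) and a \emph{left-step} otherwise. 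In a right-step the maximal edge is $v_{r_j}v_{r_j+1}$, and I put $\delta^{(j)}:=\delta(v_{r_j},v_{r_j+1})$; in a left-step it is $v_{l_j-1}v_{l_j}$, and $\delta^{(j)}:=\delta(v_{l_j-1},v_{l_j})$. I would first check the standard fact that the maximum of $\delta$ over any interval of vertices is attained at a \emph{unique} edge (a short calculation with the binary digits of the $v_i$ using Properties~I and~II); since $\delta^{(j)}$ is this maximum over $I_{j-1}$ and is attained at an edge incident to a vertex discarded at step $j$, it follows that $\delta^{(1)}>\delta^{(2)}>\cdots>\delta^{(T)}$, so the $\delta^{(j)}$ are pairwise distinct.

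\textbf{Finding the red $4$-tuple.} Since $T=2n-1$, at least $n$ steps share a type; say right-steps $j_1<\cdots<j_{n'}$ with $n'\ge n$ (the left-step case is symmetric — only rule~(a) is used, which treats increasing and decreasing monotone sequences alike — replacing each $v_{r_j+1}$ by $v_{l_j-1}$, which lies to the left of $I_T$). For right-steps $p<q$ one has $r_q<r_p$, so the vertices $v_{r_{j_t}+1}$ are distinct and lie to the right of $I_T$. Fix $x\in I_T$. Using Property~II and the nesting $I_T\subseteq I_q\subseteq I_{q-1}$, one verifies that for right-steps $p<q$,
\[
\delta(x,v_{r_q+1})=\delta^{(q)}\qquad\text{and}\qquad\delta(v_{r_q+1},v_{r_p+1})=\delta^{(p)};
\]
indeed, in each case the block of consecutive edges whose $\delta$-values are being maximized lies inside $I_{q-1}$ (resp.\ $I_{p-1}$) and contains the boundary edge realizing the maximum of $\delta$ there. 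Now apply property~(2) of Lemma~\ref{start} to any $n$ of the distinct values $\delta^{(j_1)},\dots,\delta^{(j_{n'})}$: it contains a bad triple $\alpha<\beta<\gamma$ with $\phi(\alpha,\beta)=\phi(\beta,\gamma)=$ blue and $\phi(\alpha,\gamma)=$ red. Pick right-steps $q_1<q_2<q_3$ with $\delta^{(q_1)}=\gamma$, $\delta^{(q_2)}=\beta$, $\delta^{(q_3)}=\alpha$ (possible since $j\mapsto\delta^{(j)}$ is strictly decreasing). Then the $4$-tuple $x<v_{r_{q_3}+1}<v_{r_{q_2}+1}<v_{r_{q_1}+1}$ has consecutive-pair $\delta$-sequence exactly $(\alpha,\beta,\gamma)$, which is strictly increasing, so rule~(a) colors it red.

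\textbf{Where the difficulty lies.} The only step that needs genuine care is the displayed pair of identities $\delta(x,v_{r_q+1})=\delta^{(q)}$ and $\delta(v_{r_q+1},v_{r_p+1})=\delta^{(p)}$: this relies on the intervals $I_j$ shrinking from only one end per step, on $\delta^{(j)}$ being the \emph{unique} maximum of $\delta$ over $I_{j-1}$, and on that maximum being realized at the boundary edge just outside $I_j$. The remaining ingredients — the counting that $T=2n-1$ splits are possible and leave $I_T$ nonempty, and the passage from a bad triple to a red $4$-tuple via rule~(a) — are routine.
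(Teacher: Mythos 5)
Your proof is correct and follows essentially the same strategy as the paper's: iteratively split at the maximal $\delta$-edge keeping the large side, classify each discarded side by direction, pigeonhole $2n$ (you use $2n-1$) splits down to $n$ of one type, verify via Property~II that the corresponding boundary vertices realize a strictly monotone $\delta$-sequence, and then apply property~(2) of Lemma~\ref{start} together with rule~(a) to produce a red $4$-tuple. The paper phrases the direction classification as white/black labels and invokes Property~IV at the end, but these are notational rather than substantive differences from your right-step/left-step bookkeeping and explicit identity check.
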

\proof  We greedily construct a set $D_t = \{\delta^{\ast}_1, \delta^{\ast}_{2},\ldots, \delta^{\ast}_{t}\}\subset \{\delta_1, \delta_2, \ldots, \delta_m\}$ and a set $S_t \subset \{v_1,\ldots, v_m\}$ such that the following holds.

\begin{enumerate}
\item We have $\delta^{\ast}_{1} > \cdots > \delta^{\ast}_{t}$, where $\delta^{\ast}_j = \delta(v_{i_j},v_{i_j + 1})$.

\item The indices of the vertices in $S_t$ are consecutive, that is, $S_t = \{v_r,v_{r + 1},\ldots, v_{s-1},v_s\}$ for $1 \leq r < s \leq n$.  Moreover, $\delta^{\ast}_t > \max\{\delta_r,\delta_{r + 1},\ldots, \delta_{s-1}\}$.

\item $|S_t| > m - tm/2n$.

\item For each $\delta^{\ast}_{j}  = \delta(v_{i_j},v_{i_j + 1}) \in D_t$, consider the set of vertices $$S =  \{v_{i_{j + 1}},v_{i_{j + 1} + 1},v_{i_{j + 2}},v_{i_{j + 2} + 1}\ldots, v_{i_{t}}, v_{i_t + 1}\}\cup S_t.$$  Then either every element in $S$ is greater than $v_{i_j}$ or every element in $S$ is less than $v_{i_j + 1}$.  In the former case we will label $\delta^{\ast}_{j}$ \emph{white}, in the latter case we label it \emph{black}.

\end{enumerate}

We start with the $D_0 = \emptyset$ and $S_0 = \{v_1,\ldots, v_{m}\}$.  Having obtained $D_{t-1} = \{\delta^{\ast}_{1},\ldots, \delta^{\ast}_{t-1}\}$ and $S_{t-1} = \{v_r,\ldots,v_s\}$, where $1\leq r < s \leq n$, we construct $D_{t}$ and $S_{t}$ as follows.  Let $\delta^{\ast}_{t}  = \delta(v_{i_{t}},v_{i_{t} + 1})$ be the unique largest element in $\{\delta_r,\delta_{r + 1},\ldots, \delta_{s-1}\}$, and set $D_{t} = D_{t-1}\cup \delta^{\ast}_{t}$.   The uniqueness of $\delta^{\ast}_{t}$ follows from Properties I and II.  We partition $S_{t-1} = T_1\cup T_2$, where $T_1 = \{v_r, v_{r + 1}, \ldots, v_{i_{t}}\}$ and $T_2 = \{v_{i_{t } + 1}, v_{i_{t} + 2}, \ldots, v_s\}$.   By Lemma \ref{ststart}, either $|T_1| < m/2n$ or $|T_2| < m/2n$.  If $|T_1| < m/2n$, we set $S_t = T_2$ and label $\delta^{\ast}_t$ white.  Likewise, if $|T_2| < m/2n$, we set $S_t = T_1$ and label $\delta^{\ast}_t$ black.  By induction, we have $$|S_t| > |S_{t-1}| - m/2n \geq (m-(t-1)m/2n) - m/2n = m - tm/2n.$$

Since $|S_0| = m$ and $|S_{t}| \geq 1$ for $t = 2n$, we can construct $D_{2n} = \{\delta^{\ast}_{1},\ldots, \delta^{\ast}_{2n}\}$ with the desired properties.  By the pigeonhole principle, there are at least $n$ elements in $D_{2n}$ with the same label, say \emph{white}.  The other case will follow by a symmetric argument.  We remove all black labeled elements in $D_{2n}$, and let $\{\delta^{\ast}_{j_1},\ldots, \delta^{\ast}_{j_n}\}$ be the resulting set.

Now consider the vertices $v_{j_1}, v_{j_2}, \ldots, v_{j_{n}}, v_{j_n + 1} \in V$.  By construction and by Property II, we have $v_{j_1} < v_{j_2} < \cdots < v_{j_{n}} < v_{j_n + 1}$ and $\delta(v_{j_1},v_{j_2}) = \delta^{\ast}_{i_{j_1}}, \delta(v_{j_2},v_{j_3}) = \delta^{\ast}_{i_{j_2}}, \ldots, \delta(v_{j_{n }}, v_{j_{n +1}}) = \delta^{\ast}_{i_{j_{n}}} $.  Therefore, we have a monotone sequence $$\delta(v_{j_1},v_{j_2}) > \delta(v_{j_2},v_{j_3}) > \cdots >  \delta(v_{j_{n }}, v_{j_{n +1}}).$$

By Lemma \ref{start}, there is a bad triple in the set $\{\delta^{\ast}_{j_1},\ldots, \delta^{\ast}_{j_n}\}$ with respect to $\phi$.  By Property IV, $\chi$ does not color every 4-tuple in $V = \{v_1,\ldots, v_m\}$ blue, which completes the proof of Lemma \ref{ststart2}. \qed

\medskip

Now let us go back and prove Lemma \ref{ststart}.  First, we make the following observation.

\begin{observation}\label{distinct} Let $v_1 < \cdots < v_m \in V$ such that $\chi$ colors every 4-tuple in the set $\{v_1,\ldots, v_m\}$ blue.  Then for $\delta_i = \delta(v_i,v_{i + 1})$, $\delta_i \neq \delta_j$ for $1 \leq i < j < m$.

\end{observation}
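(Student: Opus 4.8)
The plan is to derive a contradiction from the existence of two equal $\delta$-values $\delta_i = \delta_j$ with $i < j$, using the stepping-up properties together with the red-coloring rules (a)--(d). First I would reduce to the case where $i$ and $j$ are \emph{as close as possible}: if there is any pair of equal values, pick a pair $\delta_i=\delta_j$, $i<j$, with $j-i$ minimal. By Property~I consecutive $\delta$'s differ, so $j \geq i+2$. The key structural fact I want is that between two occurrences of the same value $d = \delta_i = \delta_j$, every intermediate $\delta_\ell$ (for $i<\ell<j$) must be \emph{smaller} than $d$: indeed if some $\delta_\ell > d$, then applying Property~II to the block $v_i < \cdots < v_{j+1}$ one sees $\delta(v_i,v_{j+1}) = \max_\ell \delta_\ell > d$, but one can also localize to show that the two copies of $d$ cannot both ``survive'' as gaps, contradicting Property~III (which forbids $\delta(v_1,v_2)=\delta(v_3,v_4)$ when the first gap dominates the middle one). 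So the cleanest route is: the value $d$ appearing twice must be a \emph{local minimum at both occurrences}, i.e. $\delta_{i-1} > \delta_i < \delta_{i+1}$ and $\delta_{j-1} > \delta_j < \delta_{j+1}$, with all gaps strictly between $i$ and $j$ exceeding $d$ — and in the minimal pair this forces $j = i+2$ with $\delta_{i+1} > d = \delta_i = \delta_{i+2}$.

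Next I would examine this local configuration $\delta_i < \delta_{i+1} > \delta_{i+2}$ with $\delta_i = \delta_{i+2}$ directly against rule (b): the $4$-tuple $(v_i, v_{i+1}, v_{i+2}, v_{i+3})$ has gaps $\delta_i < \delta_{i+1} > \delta_{i+2}$ and $\delta_i = \delta_{i+2}$, so by rule (b) $\chi(v_i,v_{i+1},v_{i+2},v_{i+3})$ is red — directly contradicting the assumption that every $4$-tuple in $\{v_1,\dots,v_m\}$ is blue. (I should double-check that rule (b) as stated refers to $\delta_1,\delta_2,\delta_3$ being the gaps of the four chosen vertices $v_1<v_2<v_3<v_4$ in their own right, which it does; and that such a block of four consecutive vertices $v_i,\dots,v_{i+3}$ actually exists, i.e. $i+3 \le m$, which I can arrange since if the repeated pair is at the very top we can instead look downward at $v_{i-1},v_i,v_{i+1},v_{i+2}$ and invoke rule (d) or symmetry.) The only care needed is the boundary case where the minimal-distance repeated pair sits at the extreme left or right end of the sequence, so that one of the two natural enclosing $4$-tuples runs off the end; there I would use the mirror rule (the configuration $\delta_{i-1} > \delta_i < \delta_{i+1}$ with $\delta_{i-1} = \delta_{i+1}$ is handled symmetrically, or one falls back on Property~III to exclude it outright).

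The main obstacle I anticipate is the reduction step — showing that a minimal repeated pair must occur at distance exactly $2$ as a local minimum of the $\delta$-sequence, rather than some more spread-out configuration where the equal values are separated by a long valley. This requires carefully combining Property~II (the max property) with Property~III to rule out an intermediate gap that is larger than the repeated value, and then using minimality of $j-i$ to collapse the remaining case. Once the configuration is pinned down to $\delta_i = \delta_{i+2}$ with a peak in between, rules (b) (or its mirror, rule (d)) finish it immediately, so the entire content of the Observation is really in that normalization argument. I expect this to be a short lemma-style argument of a few lines, but it is the one place where one genuinely has to think rather than quote a property.
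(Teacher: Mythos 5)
Your instinct to use rule (b) and Property III is right, but the reduction step you flag as ``the one place where one genuinely has to think'' is both unnecessary and, as written, incorrect. First, the internal logic is inconsistent: you begin by asserting that every intermediate $\delta_\ell$ (for $i<\ell<j$) must be \emph{smaller} than $d$, but conclude the paragraph by requiring that ``all gaps strictly between $i$ and $j$ exceed $d$.'' Second, the invocation of Property III to exclude an intermediate gap $\delta_\ell > d$ is backwards: in the 4-tuple $(v_i,v_{i+1},v_j,v_{j+1})$ the middle gap $\delta(v_{i+1},v_j)$ equals $\max_\ell \delta_\ell$ (Property II), and when this maximum exceeds $d$ we have $\delta(v_i,v_{i+1}) < \delta(v_{i+1},v_j)$ --- exactly the situation the paper's Property III statement explicitly flags as one where the first and third gaps \emph{may} coincide, so no contradiction is available from Property III there. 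Third, even granting minimality of $j-i$ and that all intermediate gaps differ from $d$, it simply does not follow that $j=i+2$; e.g.\ the pattern $d,\,d+2,\,d+1,\,d$ has a minimal repeated pair at distance $3$.

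The fix is to drop the $j=i+2$ normalization entirely and apply the rules directly to the (non-consecutive) 4-tuple $(v_i,v_{i+1},v_j,v_{j+1})$, which is what the paper does. Choose the repeated pair with $j-i$ minimal so that $\delta_i$ does not reappear strictly between $i$ and $j$. Let $\delta_r$ be the largest intermediate gap; by Property II, $\delta(v_{i+1},v_j)=\delta_r$, and by the minimality assumption $\delta_r\neq\delta_i$. If $\delta_r>\delta_i$ the three relevant $\delta$'s are $\delta_i<\delta_r>\delta_j$ with $\delta_i=\delta_j$, so rule (b) forces $\chi(v_i,v_{i+1},v_j,v_{j+1})=$ red, a contradiction. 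If $\delta_r<\delta_i$ then $\delta(v_i,v_{i+1})>\delta(v_{i+1},v_j)$ while $\delta(v_i,v_{i+1})=\delta(v_j,v_{j+1})$, contradicting Property III directly. This is the whole argument; no further normalization, boundary analysis, or appeal to rule (d) is needed (and the ``mirror'' configuration $\delta_1>\delta_2<\delta_3$ with $\delta_1=\delta_3$ is not a separate coloring case but is already impossible by Property III).
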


\begin{proof}

For the sake of contradiction, suppose $\delta_i   = \delta_j$ for $i\neq j$.    By Property I, $j \neq i+1$.  Without loss of generality, we can assume that for all $r$ such that $i < r < j$, $\delta_r \neq \delta_i$.  Set $\delta_r = \max\{\delta_{i+1},\delta_{i+2},\ldots, \delta_{j-1}\}$, and notice that $\delta(v_{i+1},v_j) = \delta_r$ by Property II.  Now if $\delta_r > \delta_i  = \delta_j$, then $\chi(v_i,v_{i+1},v_j,v_{j+1}) = $ red and we have a contradiction. If $\delta_r < \delta_i$, then this would contradict Property III. Hence, the statement follows.\end{proof}

\medskip

\noindent \emph{Proof of Lemma \ref{ststart}.}  It suffices to show that the statement holds when $W_1  = V_1$ and $W_2 = V_2$.  For the sake of contradiction, suppose $|V_1|,|V_2| \geq m/2n = 16n^4$.  Recall that  $\delta^{\ast}_1 = \delta(v_{i_1},v_{i_1 + 1})$, $V_1 = \{v_1,v_2,\ldots, v_{i_1}\}$, $V_2 = \{v_{i_1 + 1},v_{i_1 + 1}, \ldots, v_{m}\}$, and set $A_1 = \{\delta_1,\ldots, \delta_{i_1 - 1}\}$ and $A_2 = \{\delta_{i_1 + 1},\ldots, \delta_{m-1}\}$.  For $i \in \{1,2\}$, let us partition $A_i = A_i^r\cup A_i^b$ where
$$A_i^r = \{\delta_j \in A_i: \phi(\delta^{\ast}_1,\delta_j) = \textnormal{ red}\}\qquad \hbox{ and } \qquad A^b_i = \{\delta_j \in A_i: \phi(\delta^{\ast}_1,\delta_j) = \textnormal{ blue}\}.$$  By the pigeonhole principle, either $|A^b_2| \geq 8n^4$ or $|A^r_2| \geq 8n^4$.  Without loss of generality, we can assume that $|A^b_2| \geq 8n^4$  since a symmetric argument would follow otherwise.

Fix $\delta_{j_1} \in A_1^b$ and $\delta_{j_2} \in A_2^b$, and recall that $\delta_{j_1} = \delta(v_{j_1},v_{j_1 + 1})$ and $\delta_{j_2} = \delta(v_{j_2},v_{j_2 + 1})$.  By Observation~\ref{distinct},  $\delta_{j_1} \neq \delta_{j_2}$, and by Property II, we have $\delta(v_{j_1+1},v_{j_2}) = \delta^{\ast}_{1}$.  Since $\chi(v_{j_1},v_{j_1+1},v_{j_2},v_{j_2+1}) = $ blue, this implies that $\phi(\delta_{j_1},\delta_{j_2}) = $ red.   By Lemma \ref{start} and Observation \ref{distinct}, we have $|A_1^b| < n$.  Indeed, otherwise we would have a monochromatic red copy of $K_{n,n}$ in $A$ with respect to $\phi$.  Therefore we have $|A_1^r| \geq 16n^4 - n - 1$.  Again by the pigeonhole principle, there is a subset $B \subset A_1^r$ of size at least $(16n^4 -n - 1)/n \geq 8n^3 - 1$, such that $B = \{\delta_j, \delta_{j + 1}, \ldots, \delta_{j + 8n^3 - 2}\}$, and whose corresponding vertices are $U= \{v_j,v_{j+1},\ldots, v_{j + 8n^3-1}\}$.  For simplicity and without loss of generality, let us rename $U= \{u_1,\ldots, u_{8n^3}\}$ and $\delta_i = \delta(u_i,u_{i+1})$ for $1 \leq i \leq 8n^3 - 1$.

Just as before, we greedily construct a set $D_t = \{\delta^{\ast}_{1},\ldots, \delta^{\ast}_{t}\}\subset \delta^{\ast}_1\cup \{\delta_1,\ldots, \delta_{8n^3 - 1}\}$ and a set $S_t \subset \{u_1,\ldots, u_{8n^3}\}$ such that the following holds.

\begin{enumerate}
\item We have $\delta^{\ast}_{1} > \cdots > \delta^{\ast}_{t}$, where $\delta^{\ast}_j = \delta(u_{i_j},u_{i_j + 1})$ for $i\geq 2$.

\item For each $\delta^{\ast}_{j}  = \delta(u_{i_j},u_{i_j + 1}) \in D_t$, consider the set of vertices $$S =  \{u_{i_{j + 1}},u_{i_{j + 1} + 1},\ldots, u_{i_{h}}, u_{i_h + 1}\}\cup S_t.$$  Then either every element in $S$ is greater than $u_{i_j}$ or every element in $S$ is less than $u_{i_j + 1}$.  In the former case we will label $\delta^{\ast}_{i_j}$ \emph{white}, in the latter case we label it \emph{black}.

\item The indices of the vertices in $S_t$ are consecutive, that is, $S_t = \{u_r,u_{r + 1},\ldots, u_{s-1},u_s\}$ for $1 \leq r < s \leq n$.  Set $B_t = \{\delta_r,\delta_{r+1},\ldots, \delta_{u_{s - 1}}\}$.

\item for each $\delta^{\ast}_{j} \in D_t$, either $\phi(\delta^{\ast}_{j},\delta) = $ red for every $\delta \in \{\delta^{\ast}_{j + 1},\delta^{\ast}_{j + 2},\ldots, \delta^{\ast}_t\}\cup B_t$, or $\phi(\delta^{\ast}_{j},\delta) = $ blue for every $\delta \in \{\delta^{\ast}_{j + 1},\delta^{\ast}_{j + 2},\ldots, \delta^{\ast}_t\}\cup B_t$.

\item We have $|S_t| \geq 8n^3  - (t-1)2n^2$.

\end{enumerate}

We start with $S_1 = U = \{u_1,\ldots, u_{8n^3}\}$ and $D_1 = \{\delta^{\ast}_1\}$, where we recall that $\delta^{\ast}_1 =\delta(v_{i_1},v_{i_1 + 1})$.  Having obtained $D_{t-1} = \{\delta^{\ast}_{1},\ldots, \delta^{\ast}_{t-1}\}$ and $S_{t-1} = \{u_r,\ldots,u_s\}$, $1\leq r < s \leq n$, we construct $D_{t}$ and $S_{t}$ as follows.  Let $\delta^{\ast}_{t}  = \delta(u_{i_{t}},u_{i_{t} + 1})$ be the unique largest element in $\{\delta_r,\delta_{r + 1},\ldots, \delta_{s-1}\}$, and set $D_{t} = D_{t-1}\cup \delta^{\ast}_{t}$.  The uniqueness of $\delta^{\ast}_{t}$ follows from Properties I and II. Let us partition $S_{t} = T_1\cup T_2$, where $T_1 = \{u_r, u_{r + 1}, \ldots, u_{i_{t}}\}$ and $T_2 = \{u_{i_{t} + 1}, u_{i_{h + 1} + 2}, \ldots, u_s\}$.   Now we make the following observation.

\begin{observation}\label{ul}

We have $|T_1| < 2n^2$ or $|T_2| < 2n^2$.

\end{observation}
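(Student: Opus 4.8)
The plan is to mimic the argument that established Lemma~\ref{ststart}, applied now to the vertex set $S_t = \{u_r,\dots,u_s\}$ and its splitting element $\delta^{\ast}_t = \delta(u_{i_t},u_{i_t+1})$, which is the unique maximum of $\{\delta_r,\dots,\delta_{s-1}\}$. So assume for contradiction that both $|T_1| \ge 2n^2$ and $|T_2| \ge 2n^2$, where $T_1 = \{u_r,\dots,u_{i_t}\}$ and $T_2 = \{u_{i_t+1},\dots,u_s\}$. Write $C_1 = \{\delta_r,\dots,\delta_{i_t-1}\}$ for the $\delta$-values internal to $T_1$ and $C_2 = \{\delta_{i_t+1},\dots,\delta_{s-1}\}$ for those internal to $T_2$; each has size at least $2n^2-1$. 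As in the proof of Lemma~\ref{ststart}, partition each $C_\ell$ according to the color $\phi(\delta^{\ast}_t,\cdot)$ into $C_\ell^r$ and $C_\ell^b$.

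First I would record the analogue of the key cross-edge computation: if $\delta_{j_1}\in C_1$ and $\delta_{j_2}\in C_2$, then since $\delta^{\ast}_t$ exceeds every $\delta$ strictly between indices $i_t$ and anything in $T_1\cup T_2$, Property~II gives $\delta(u_{j_1+1},u_{j_2}) = \delta^{\ast}_t$, and because $\delta^{\ast}_t > \delta_{j_1}$ and $\delta^{\ast}_t > \delta_{j_2}$ the $4$-tuple $u_{j_1}<u_{j_1+1}<u_{j_2}<u_{j_2+1}$ realizes the pattern $\delta_{j_1} < \delta^{\ast}_t > \delta_{j_2}$; since $\chi$ colors it blue, rule (c) forces $\{\delta_{j_1},\delta^{\ast}_t,\delta_{j_2}\}$ to be non-monochromatic under $\phi$, i.e. $\phi(\delta_{j_1},\delta_{j_2})$ is the color \emph{not} equal to the common value (when $\phi(\delta^{\ast}_t,\delta_{j_1})=\phi(\delta^{\ast}_t,\delta_{j_2})$), and rule (b) is excluded by Observation~\ref{distinct} which guarantees $\delta_{j_1}\ne\delta_{j_2}$. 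In particular, if $\delta_{j_1}\in C_1^b$ and $\delta_{j_2}\in C_2^b$ then $\phi(\delta_{j_1},\delta_{j_2}) = $ red, and symmetrically the two "red'' classes give a red complete bipartite graph between them; in either case a monochromatic $K_{n,n}$ would appear in $A$ unless one side has fewer than $n$ vertices. Thus $\min(|C_1^b|,|C_2^b|) < n$ and $\min(|C_1^r|,|C_2^r|) < n$; combined with $|C_1|,|C_2|\ge 2n^2-1$ this forces (say) $|C_1^r|\ge 2n^2-1-n$ together with $|C_2^b| < n$, or a symmetric configuration — in every case one of $C_1,C_2$ contains a block of $\ge (2n^2-1-n)/n \ge 2n-1$ \emph{consecutive} $\delta$'s all of one color with respect to $\phi(\delta^{\ast}_t,\cdot)$, hence $\ge n$ consecutive ones all of one color.

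The finishing step is to feed this monochromatic block into property~(2) of Lemma~\ref{start}. A block of $n$ consecutive $\delta$'s $\delta_p<\delta_{p+1}\cdots$ inside (say) $T_1$ need not itself be monotone, but I can run the same greedy splitting recursion on the corresponding vertex block — this is exactly the recursion already being set up in the paragraphs following Observation~\ref{ul} — to extract $n$ values $\delta^{\ast}_{j_1}>\cdots>\delta^{\ast}_{j_n}$ which, via Property~IV, are realized as a monotone decreasing sequence of consecutive $\delta$'s along $n$ vertices; since these $n$ values all lie in $C_1^r$ (or $C_1^b$), and also all have the same $\phi(\delta^{\ast}_t,\cdot)$-color, Lemma~\ref{start}(2) says the set $\{\delta^{\ast}_{j_1},\dots,\delta^{\ast}_{j_n}\}$ must contain a bad triple, and Property~IV then shows $\chi$ colors some $4$-tuple of $\{u_1,\dots,u_{8n^3}\}$ red — contradicting that every $4$-tuple is blue. (Actually the cleanest route keeps the recursion global: one simply never allows $C_1$ or $C_2$ to be large, so the bound $|T_1|<2n^2$ or $|T_2|<2n^2$ is what makes the outer recursion of properties (1)--(5) go through.)

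The main obstacle I anticipate is bookkeeping rather than a genuine difficulty: one must be careful that the "block'' extracted is genuinely a set of \emph{consecutive} indices (so that Property~IV applies to get a monotone realization) and that the color classes one plays off against each other are the ones for which rule (c)/(d) produces the desired monochromatic-under-$\phi$ conclusion — the asymmetry between the $\delta_1<\delta_2>\delta_3$ cases (b),(c) and the $\delta_1>\delta_2<\delta_3$ case (d) means one should check which of $T_1$, $T_2$ plays the role of the "outer'' vertices. Once the correct case analysis is pinned down, the counting is identical in spirit to the proof of Lemma~\ref{ststart}.
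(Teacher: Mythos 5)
Your overall plan---assume both parts are large, use cross $4$-tuples over the ``peak'' $\delta^{\ast}_t$ to constrain $\phi$, pigeonhole down to a long monochromatic consecutive block, and feed it into Lemma~\ref{start}(2)---is the right shape and matches the paper up to a point. But there is a genuine gap at the step you flag yourself: you write that the consecutive block ``need not itself be monotone'' and propose to fix this by running the greedy splitting recursion on it. That cannot work here for two independent reasons. First, it is circular: the recursion in the paragraphs around Observation~\ref{ul} is allowed to iterate precisely \emph{because} each step can invoke Observation~\ref{ul}; you cannot invoke that same recursion inside the proof of the observation. Second, even ignoring circularity it is quantitatively hopeless: your block has only on the order of $n$ vertices, while one step of the recursion already costs up to $2n^2$ vertices, so you could not extract anywhere near $n$ monotone deltas.

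The idea you are missing is that the block \emph{is} automatically monotone, and this is forced by rule~(d) of $\chi$, using both of the available ``anchor'' vertices $u_{i_t+1}$ (whose reference delta is $\delta^{\ast}_t$) and $v_{i_1+1}$ (whose reference delta is $\delta^{\ast}_1$). Concretely, the paper first pins down the side: for every $\delta \in B_2$ the $4$-tuple $u_{i_t}<u_\ell<u_{\ell+1}<v_{i_1+1}$ has pattern $\delta^{\ast}_t>\delta<\delta^{\ast}_1$ with $\delta^{\ast}_t<\delta^{\ast}_1$ and $\phi(\delta^{\ast}_1,\delta^{\ast}_t)=\phi(\delta^{\ast}_1,\delta)=$ red (this uses $U\subset A_1^r$!), so rule~(d) forbids $\phi(\delta^{\ast}_t,\delta)=$ blue. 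Hence $B_2$ is entirely red w.r.t.\ $\delta^{\ast}_t$, the $K_{n,n}$ argument forces $|B_1^r|<n$, and the long block lives in $B_1^b$. For such a block, if $\delta_r>\delta_{r+1}$ then the $4$-tuple $u_r<u_{r+1}<u_{r+2}<u_{i_t+1}$ (resp.\ $<v_{i_1+1}$) has the V-pattern with $\phi(\delta^{\ast}_t,\delta_r)=\phi(\delta^{\ast}_t,\delta_{r+1})=$ blue (resp.\ $\phi(\delta^{\ast}_1,\cdot)=$ red), so rule~(d) makes $\chi$ red whichever color $\phi(\delta_r,\delta_{r+1})$ takes --- contradiction, so the block is increasing. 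Your proposal never uses the fact that $U\subset A_1^r$, and without it the monotonicity (and even the choice of which of $B_1,B_2$ to take the block from) cannot be established; that is the real content of the observation. A minor additional slip: your cross-edge computation asserts the two red classes induce a red $K_{n,n}$; by rule~(c) it is actually a blue $K_{n,n}$ (though this does not affect the pigeonhole conclusion).
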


\begin{proof} For the sake of contradiction, suppose $|T_1|,|T_2| \geq 2n^2$ and let $B_1 = \{\delta_r,\delta_{r + 1}, \ldots, \delta_{i_t - 1}\}$ and $B_2 = \{\delta_{i_1 + 1},\delta_{i_t + 2}, \ldots, \delta_{s - 1}\}$.   Notice that for every $\delta \in B_2$ we have $\phi(\delta^{\ast}_{t}, \delta) = $ red.  Indeed, suppose for $\delta = \delta(u_{\ell},u_{\ell + 1}) \in B_2$ we have $\phi(\delta^{\ast}_{t}, \delta) = $ blue.  Recall $\delta^{\ast}_1 = \delta(v_{i_1},v_{i_1 + 1})$, $\delta^{\ast}_t = \delta(u_{i_t},u_{i_t + 1})$, where $$u_{i_t} < u_{i_t + 1} <  u_{\ell} < u_{\ell + 1} < v_{i_1} < v_{i_1 + 1}.$$  Consider the vertices $ v_{i_1 + 1},u_{i_{t}},u_{\ell},u_{\ell + 1}$.  By definition of $\chi$, we have $\chi(u_{i_t},u_{\ell},u_{\ell + 1},v_{i_1 + 1})  =  $ red, contradiction.  Therefore, by the same argument as above, there are less than $n$ elements $\delta \in B_1$ such that $\phi(\delta^{\ast}_{t},\delta)  = $ red.  Since $|T_1| > 2n^2$, by the pigeonhole principle, there is a set of $n + 1$ consecutive vertices $\{u_{\ell}, u_{\ell + 1}, \ldots, u_{\ell + n}\} \subset T_1$ and the subset $\{\delta_{\ell},\delta_{\ell + 1},\ldots, \delta_{\ell + n -1}\}\subset B_1$ such that $\phi(\delta^{\ast}_{t}, \delta) = $ blue for every $\delta \in \{\delta_{\ell},\delta_{\ell + 1},\ldots, \delta_{\ell + n - 1}\}$.  Notice that

$$\delta_{\ell} < \delta_{\ell + 1} < \cdots < \delta_{\ell + n-1}.$$

Indeed, suppose $\delta_r > \delta_{r+1}$ for some $r \in \{\ell, \ell + 1,\ldots, \ell+ n - 2\}$.  Then $\phi(\delta_r, \delta_{r + 1}) = $ red implies that $\chi(u_{i_t + 1}, u_r,u_{r + 1},u_{r + 2}) = $ red, contradiction.  Likewise if $\phi(\delta_r, \delta_{r + 1}) = $ blue, then $\chi(v_{i_1 + 1}, u_r,u_{r + 1},u_{r + 2}) = $ red, contradiction.  However, by Lemma \ref{start}, there is a bad triple in $\{\delta_{\ell}, \delta_{\ell + 1}, \ldots, \delta_{\ell + n - 1}\}$ with respect to $\phi$.  Since $\delta_{\ell}, \delta_{\ell + 1}, \ldots, \delta_{\ell + n - 1}$ forms a monotone sequence, by Property IV, $\chi$ colors some 4-tuple in the set $\{u_{\ell},u_{\ell + 1},\ldots, u_{\ell + n}\}$ red, contradiction.  Hence the statement follows. \end{proof}

If $|T_1| < 2n^2$, we set $S_t = T_2$.  Otherwise by Observation \ref{ul} we have $|T_2| < 2n^2$ and we set $S_t = T_1$.  Hence $|S_t| > |S_{t-1}| - 2n^2$.

Since $|S_1| = |U| =8n^3$, we have $|S_t| > 0$ for $t = 2n$.  Therefore, we can construct $D_{2n} = \{\delta^{\ast}_{1},\ldots, \delta^{\ast}_{2n}\}$ with the desired properties.  By the pigeonhole principle, at least $n$ elements in $D_{2n}$ have the same label, say \emph{white}.  The other case will follow by a symmetric argument.   We remove all black labeled elements in $D_{2n}$, and let $\{\delta^{\ast}_{j_1},\ldots, \delta^{\ast}_{j_n}\}$ be the resulting set, and for simplicity, let $\delta^{\ast}_{j_r} = \delta(v_{j_r},v_{j_r + 1})$.

Now consider the vertices $v_{j_1}, v_{j_2}, \ldots, v_{j_{n}}, v_{j_n + 1} \in V$.  By construction and by Property II, we have $v_{j_1} < v_{j_2} < \cdots < v_{j_{n}} < v_{j_n + 1}$ and $\delta(v_{j_1},v_{j_2}) = \delta^{\ast}_{i_{j_1}}, \delta(v_{j_2},v_{j_3}) = \delta^{\ast}_{i_{j_2}}, \ldots, \delta(v_{j_{n }}, v_{j_{n +1}}) = \delta^{\ast}_{i_{j_{n}}} $.  Therefore, we have a monotone sequence $$\delta(v_{j_1},v_{j_2}) > \delta(v_{j_2},v_{j_3}) > \cdots >  \delta(v_{j_{n }}, v_{j_{n +1}}).$$

By Lemma \ref{start}, there is a bad triple in the set $\{\delta^{\ast}_{j_1},\ldots, \delta^{\ast}_{j_n}\}$ with respect to $\phi$.  By Property IV, $\chi$ does not color every 4-tuple in $V = \{v_1,\ldots, v_m\}$ blue which is a contradiction.  \qed

\section{A new lower bound for $r_4(5,n)$}

Again we apply a variant to the Erd\H os-Hajnal stepping up lemma in order to establish a new lower bound for $r_4(5,n)$.  We will use the following lemma.

 \begin{lemma}\label{offdiag}
 For $n\geq 5$, there is an absolute constant $c> 0$ such that the following holds.  For $N = \lfloor n^{c\log n} \rfloor $, there is a red/blue coloring $\phi$ on the pairs of $\{0,1,\ldots, N - 1\}$ such that

 \begin{enumerate}

 \item there is no monochromatic red copy of $K_{\lfloor \log n \rfloor}$,

 \item there are no two disjoint $n$-sets $A,B \subset \{0,1,\ldots, N - 1\}$, such that $\phi(a,b) = $ blue for every $a\in A$ and $b\in B$ (i.e. no blue $K_{n,n}$).

\item there is no $n$-set $A\subset \{0,1,\ldots,  N - 1\}$ such that every triple $a_i,a_j,a_k \in A$, where $a_i < a_j < a_k$, avoids the pattern $\phi(a_i,a_j) =\phi (a_j,a_k) = blue$ and $\phi(a_i,a_k) = red$.

 \end{enumerate}

 \end{lemma}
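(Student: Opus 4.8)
The plan is to establish the coloring by a probabilistic argument, exactly paralleling the proof of Lemma~\ref{start} but now tracking three events simultaneously instead of two. Set $N = \lfloor n^{c\log n}\rfloor$ for a small constant $c$ to be fixed at the end, and color each pair of $\{0,1,\dots,N-1\}$ red or blue independently, but this time \emph{not} with probability $1/2$: since we need to forbid a \emph{red} clique of size only $\lfloor\log n\rfloor$, we should make red rare. A natural choice is to color a pair red with probability $p$ and blue with probability $1-p$, where $p$ is roughly $1/n$ (so that $N^{\log n}p^{\binom{\log n}{2}}$ stays small — this is the same balance that appears in the classical bound $r_2(s,t) = t^{\Theta(1)}$ for fixed $s$). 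First I would bound the expected number of red $K_{\lfloor\log n\rfloor}$'s by $\binom{N}{\log n}p^{\binom{\log n}{2}}$, which is $o(1)$ when $\log N \ll (\log n)\cdot \log(1/p)$, i.e. when $c$ is small enough and $p$ is, say, $n^{-1/2}$; this is exactly what dictates the $n^{c\log n}$ size of the ground set.

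Next I would handle condition~(2): the expected number of blue $K_{n,n}$'s is at most $\binom{N}{n}^2 (1-p)^{n^2} \le N^{2n} e^{-pn^2}$, which is $o(1)$ as long as $pn^2$ dominates $2n\log N = 2n\cdot cn(\log n)^2$, i.e. as long as $pn \gg c(\log n)^2$ — comfortably satisfied for $p = n^{-1/2}$ and $n$ large. For condition~(3) I would reuse verbatim the partial Steiner system device from Lemma~\ref{start}: fix an $n$-set $A$, take a partial Steiner $(n,3,2)$-system $S$ on $A$ with $|S| = c'n^2$ edges, observe that for each ordered triple the probability of \emph{not} being ``bad'' (bad $=$ blue, blue, red on the two short legs and the long one) is $1 - p(1-p)^2$, and since the edges of $S$ are pairwise $1$-intersecting the corresponding events are independent, so the probability that $A$ has no bad triple is at most $(1 - p(1-p)^2)^{c'n^2} \le e^{-c'p(1-p)^2 n^2}$. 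Multiplying by $\binom{N}{n} \le N^n = e^{cn(\log n)^2}$, this is $o(1)$ provided $p n \gg (\log n)^2/c'$, the same condition as before. Finally, a union bound and Markov's inequality show that with positive probability all three bad events fail, giving the desired $\phi$.

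The one point requiring a little care — and the main obstacle — is choosing $p$ (and then $c$) so that \emph{all three} constraints hold at the same ground-set size $N = n^{c\log n}$. Conditions~(2) and~(3) both want $p$ \emph{large} (they improve as $p$ grows), while condition~(1) wants $p$ \emph{small}: the red-clique calculation needs $\log N \le \tfrac{1}{2}\binom{\log n}{2}\log(1/p)$, i.e. $cn(\log n)^2 \lesssim (\log n)^2 \log(1/p)$, which forces $\log(1/p) \gtrsim cn$ — so in fact we should take $p$ \emph{exponentially} small, $p = 2^{-\gamma n}$ for a suitable constant $\gamma$. But then $pn^2 = n^2 2^{-\gamma n} \to 0$ and conditions~(2),(3) \emph{fail}. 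This tension signals that the honest argument cannot be a one-shot product coloring with a single bias; instead I expect the actual construction to be \emph{recursive} (build the coloring on $n^{c\log n}$ points by iterating a construction on $n^{O(1)}$ points roughly $\log n$ times, each level contributing a factor $n^{O(1)}$ to the ground set and forbidding red cliques one size larger), which is precisely the mechanism behind the $r_2(s,n) = n^{\Theta(1)}$ bounds and behind the ``$n^{c\log n}$'' shape of the final answer. So the real plan is: prove a base-case coloring on $\mathrm{poly}(n)$ points with analogues of (2) and (3) and no red $K_3$ (or no red $K_4$), then amplify the red-clique threshold from constant to $\log n$ by a product/blow-up construction over $\log n$ stages, checking at each stage that the bipartite and bad-triple conditions are inherited (they only get harder by a bounded factor, absorbed into constants). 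The delicate bookkeeping is ensuring the ``no bad triple'' pattern — which is not symmetric and involves the linear order — is preserved under the blow-up; I would order the blown-up vertices lexicographically by (stage-1 block, within-block position) and verify the pattern descends, which is the step I would write out most carefully.
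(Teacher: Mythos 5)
Your first two paragraphs are essentially the paper's argument (the paper omits the details, saying only that the proof ``follows by the same probabilistic argument used for Lemma~\ref{start}''): color each pair red independently with probability $p$ polynomial in $1/n$, bound the expected counts of red $K_{\lfloor\log n\rfloor}$'s, blue $K_{n,n}$'s, and ``all-good'' $n$-sets (via the partial Steiner system device), and union bound. That plan works with, say, $p=n^{-1/2}$ and $c$ sufficiently small, and is the intended proof.

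The trouble is in your third paragraph, and it is an arithmetic slip rather than a real obstruction. You write that the red-clique constraint forces $cn(\log n)^2\lesssim (\log n)^2\log(1/p)$, hence $\log(1/p)\gtrsim cn$ and $p=2^{-\Theta(n)}$. But with $N=\lfloor n^{c\log n}\rfloor$ we have $\log N = c(\log n)^2$, \emph{not} $cn(\log n)^2$ (the extra factor of $n$ has no source). Redoing the calculation correctly: $\binom{N}{\log n}p^{\binom{\log n}{2}}<1$ requires $\log n\cdot\log N<\binom{\log n}{2}\log(1/p)$, i.e.\ $c(\log n)^2<\frac{\log n-1}{2}\log(1/p)$, i.e.\ $p<n^{-2c(1+o(1))}$. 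Thus condition~(1) only asks $p$ to be a small \emph{polynomial} in $1/n$, while conditions~(2) and~(3) need $p\gtrsim(\log n)^2/n$. The window $[(\log n)^2/n,\,n^{-2c}]$ is nonempty for $c<1/2$ and $n$ large (your own choice $p=n^{-1/2}$ sits in it once $c<1/4$). So there is no tension, no need to take $p$ exponentially small, and no need for the recursive blow-up construction you sketch at the end; the one-shot random coloring you describe at the start already suffices.
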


We omit the proof of Lemma \ref{offdiag}, which follows by the same probabilistic argument used for Lemma~\ref{start}.  For the reader's convenience, let us restate the result that we are about to prove.

\begin{theorem}
For $n \geq 5$, there is an absolute constant $c>0$ such that $r_4(5,n) > 2^{n^{c\log n}}$.

\end{theorem}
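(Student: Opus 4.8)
The plan is to mimic the stepping-up construction from Section~\ref{clique} (the $r_4(6,n)$ argument) but starting from the coloring $\phi$ of Lemma~\ref{offdiag} rather than Lemma~\ref{start}. Let $c>0$ be the constant from Lemma~\ref{offdiag}, set $A=\{0,1,\ldots,N-1\}$ with $N=\lfloor n^{c\log n}\rfloor$, and take $\phi:\binom{A}{2}\to\{\textnormal{red},\textnormal{blue}\}$ with the three listed properties. Put $V=\{0,1,\ldots,2^{N}-1\}$, so $|V|=2^{N}=2^{n^{c\log n}}$, and define $\delta(u,v)$ and the stepping-up Properties I--IV exactly as in Section~2. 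We then define a coloring $\chi:\binom{V}{4}\to\{\textnormal{red},\textnormal{blue}\}$ using essentially the same four local rules (a)--(d) as in Section~\ref{clique} for 4-tuples $v_1<v_2<v_3<v_4$ with $\delta_i=\delta(v_i,v_{i+1})$; the point is that those rules were engineered so that any red $K_5^{(4)}$ forces either a monotone triple of $\delta$'s that is $\phi$-bad, or a short "zig-zag" configuration that can be re-read as blue, giving a contradiction. The goal is to show $\chi$ has no red $K_5^{(4)}$ and no blue $K_{n'}^{(4)}$ for $n'$ polynomial in $n$, which yields $r_4(5,n')>2^{n^{c\log n}}$, and then absorb the polynomial loss into the constant $c$.

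For the \textbf{no red $K_5^{(4)}$} direction, I would argue exactly as in the $K_6^{(4)}$ case of Section~\ref{clique}, but now only $5$ vertices $v_1<\cdots<v_5$ with $\delta_1,\ldots,\delta_4$ are available. One does a case analysis on the shape of the sequence $\delta_1,\delta_2,\delta_3,\delta_4$ (monotone; one local extremum; two local extrema). The key new ingredient compared with Section~\ref{clique} is Property~1 of Lemma~\ref{offdiag}: there is no red $K_{\lfloor\log n\rfloor}$ in $\phi$. This is what lets us handle long monotone runs of $\delta$'s: a monotone run of length $\lfloor\log n\rfloor$ among the $\delta_i$'s would, via rule~(a) and Property~IV, force a red clique of that size in $\phi$. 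So in the red-clique analysis the monotone subcase is ruled out quantitatively rather than by local re-reading, and the genuinely combinatorial work is confined to configurations with at least one local extremum, where the $K_6^{(4)}$-style case analysis on rules (b)--(d) applies verbatim. I would organize this as Cases~1--6 on the pattern of $(\delta_1,\delta_2,\delta_3,\delta_4)$ as in Section~\ref{clique}, checking in each that some 4-subtuple gets colored blue, contradiction.

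For the \textbf{no blue $K_{n'}^{(4)}$} direction, I would reuse the machinery of Lemmas~\ref{ststart}, \ref{ststart2}, and Observation~\ref{distinct} essentially unchanged, since those arguments only used: (i) Property~III-type facts about $\delta$, (ii) the $K_{n,n}$-freeness of $\phi$ — here it is blue-$K_{n,n}$-freeness (Property~2 of Lemma~\ref{offdiag}) instead of monochromatic-$K_{n,n}$-freeness, which is exactly what the "blue" case of the argument needs — and (iii) the bad-triple-freeness of $\phi$ (Property~3, identical in both lemmas). Concretely: a blue $K_{n'}^{(4)}$ on $v_1<\cdots<v_{n'}$ gives $\delta_1,\ldots,\delta_{n'-1}$; using the recursive "peel off the maximum" scheme one shows (Lemma~\ref{ststart}-analogue) that at each stage one side has size $<n'/2n$, builds a decreasing chain $\delta^{\ast}_1>\cdots>\delta^{\ast}_{2n}$ of "white" or "black" labelled elements, extracts $n$ of one colour, and obtains $n$ vertices whose consecutive $\delta$'s form a monotone sequence; Property~3 then yields a bad triple, and Property~IV turns it into a red 4-tuple, contradicting blueness. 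The one place requiring care is the arithmetic bookkeeping of the "one side is small" bound: in Section~\ref{clique} this gave $n'=32n^5$, and I expect the same or a comparable polynomial here, say $n'=O(n^5)$; whatever polynomial comes out, writing $n'=n^5$ up to constants and replacing $n$ by $n'^{1/5}$ shows $r_4(5,n^5)>2^{n^{c\log n}}$, hence $r_4(5,m)>2^{m^{c'\log m}}$ for a smaller constant $c'$, which is the theorem.

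The main obstacle I anticipate is verifying that the \emph{same} four red-rules (a)--(d) that were tuned for $K_6^{(4)}$ still forbid a red $K_5^{(4)}$: with only four $\delta$'s instead of five there is less room to find a blue 4-subtuple inside a putative red clique, so one must check that in every local-extremum pattern the "shift by one vertex" trick (reading $\chi(v_1,v_2,v_4,v_5)$, $\chi(v_2,v_3,v_4,v_5)$, etc., via Property~II to collapse $\delta(v_i,v_j)$ to a max) still produces a blue tuple. If some pattern on four $\delta$'s is not covered, one may need to add or modify a rule and re-examine the blue analysis for consistency; but because Section~\ref{clique}'s $K_6^{(4)}$ argument already reduces, after the monotone case, to patterns with at most two local extrema on five vertices, and a red $K_5^{(4)}$ restricts attention to patterns on four $\delta$'s (at most one interior local extremum in the relevant windows after using Property~1 to kill long monotone stretches), I expect the existing rules to suffice with at most cosmetic adjustments. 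The probabilistic Lemma~\ref{offdiag} itself is routine (the $N=n^{c\log n}$ bound is exactly what a first-moment computation with a partial Steiner system and the $K_{\lfloor\log n\rfloor}$-freeness and $K_{n,n}$-freeness events yields), so no obstacle is expected there.
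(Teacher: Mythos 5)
Your high-level plan (step up from Lemma~\ref{offdiag} instead of Lemma~\ref{start}, reuse the delta machinery and Properties~I--IV, absorb the polynomial loss into $c$) matches the paper, but two of your key predictions about how the details work out are wrong, and they are not cosmetic.

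First, the coloring rules genuinely change. The paper does not use ``essentially the same four local rules (a)--(d)'' from Section~\ref{clique}: it uses \emph{five} rules (a)--(e), and rules (c), (d), (e) are different from their Section~\ref{clique} counterparts. In Section~\ref{clique}, rule~(c) colors a local-max triple red when $\{\delta_1,\delta_2,\delta_3\}$ is \emph{monochromatic} under $\phi$, and rule~(d) asks that $\phi(\delta_3,\delta_1)=\phi(\delta_3,\delta_2)$ while $\phi(\delta_1,\delta_2)$ differs. In Section~4, rule~(c) instead requires the specific bad-triple pattern $\phi(\delta_1,\delta_2)=\phi(\delta_2,\delta_3)=$ blue, $\phi(\delta_1,\delta_3)=$ red, and the single Section~\ref{clique} local-minimum rule is split into two rules (d), (e) distinguished by whether $\delta_1<\delta_3$ or $\delta_1>\delta_3$, each with a pinned-down blue/red pattern. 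These modifications are exactly what makes the case $\delta_1>\delta_2<\delta_3>\delta_4$ (which in Section~\ref{clique} was resolved only by shifting to $v_2,\ldots,v_6$) closable with just $v_1,\ldots,v_5$ --- precisely the worry you raised at the end. So the obstacle you flagged is real, and it is resolved by retuning, not by the old rules working ``verbatim.''

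Second, you misplace the role of the no-red-$K_{\lfloor\log n\rfloor}$ condition. It is not used in the red-clique analysis to kill long monotone runs: the red $K_5^{(4)}$ case analysis is entirely local (Cases~1--7) and invokes only the definition of $\chi$ and Properties~II, III, never Lemma~\ref{offdiag}. The no-red-$K_{\lfloor\log n\rfloor}$ property enters in the \emph{blue} analysis, in Claim~\ref{c1}, via an iterative ``peel off the maximum and halve'' argument that extracts $\lfloor\log n\rfloor$ elements forming a $\phi$-red clique. Claims~\ref{c1}, \ref{c2}, \ref{c3} are the genuinely new structural ingredients of Section~4, and the splitting lemma (Lemma~\ref{ststart3}) is proved using them --- it is not a copy of the proof of Lemma~\ref{ststart} from Section~\ref{clique}, contrary to your claim that the Lemma~\ref{ststart} machinery can be reused ``essentially unchanged.'' (The greedy extraction of a monotone chain, Lemma~\ref{ststart2}, is reused; the splitting lemma that feeds it is not.) Without the idea behind Claim~\ref{c1}, the blue analysis has a gap: in Section~\ref{clique} the splitting argument exploits that rule~(d) there compares $\phi(\delta_3,\delta_1)$ with $\phi(\delta_3,\delta_2)$ symmetrically, while the retuned rules (c)--(e) of Section~4 break that symmetry, so the old Observation~\ref{ul}-style argument does not transplant.
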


\proof Let $c> 0$ be the constant from Lemma \ref{offdiag}, and set $A = \{0,1,\ldots, \lfloor n^{c\log n}\rfloor -1\}$.  Let $\phi$ be the red/blue coloring on the pairs of $A$ with the properties described in Lemma \ref{offdiag}.  Set $N = 2^{\lfloor n^{c\log n}\rfloor}$ and let $V = \{0,1,\ldots, N-1\}$.   In what follows, we will use $\phi$ to define a red/blue coloring $\chi:{V\choose 4}\rightarrow \{\textnormal{red, blue}\}$ of the 4-tuples of $V$ such that $\chi$ does not produce a monochromatic red $K_5^{(4)}$, and does not produce a monochromatic blue copy of $K_{2n^4}^{(4)}$.  This would imply the desired lower bound for $r_4(5,n)$.

Just as in the previous section, for any $v \in V$, we write $v=\sum_{i=0}^{\lfloor n^{c\log n}\rfloor -1}v(i)2^i$ with $v(i) \in \{0,1\}$ for each $i$.  For $u \not = v$, set $\delta(u,v) \in A$ denote the largest $i$ for which $u(i) \not = v(i)$.   Let $v_1,v_2,v_3,v_4 \in V$ such that $v_1 < v_2< v_3< v_4$ and set $\delta_i = \delta(v_i,v_{i+1})$.  We define $\chi(v_1,v_2,v_3,v_4)$ =  red if

\begin{enumerate}

\item[(a)] $\delta_1,\delta_2,\delta_3$ is monotone and $\phi(\delta_1,\delta_2) = \phi(\delta_2,\delta_3) = $ blue and $\phi(\delta_1,\delta_3) = $ red, or

\item[(b)] $\delta_1 < \delta_2 > \delta_3$ and $\delta_1 = \delta_3$, or

\item[(c)]  $\delta_1 < \delta_2 > \delta_3$, $\delta_1 \neq \delta_3$, and $\phi(\delta_1,\delta_2)  = \phi(\delta_2,\delta_3) = $ blue and $\phi(\delta_1,\delta_3) = $ red, or

\item[(d)] $\delta_1 > \delta_2 < \delta_3$, $\delta_1 < \delta_3$, and $\phi(\delta_1,\delta_3) = \phi(\delta_2,\delta_3) = $ red and $\phi(\delta_1,\delta_2) = $ blue, or

\item[(e)] $\delta_1 > \delta_2 < \delta_3$, $\delta_1 > \delta_3$, and $\phi(\delta_1,\delta_3) = \phi(\delta_1,\delta_2) = $ red and $\phi(\delta_2,\delta_3) = $ blue.

\def\r5nRED1{
\begin{tikzpicture}
  \node at (0,0)  {$v_4$:};
  \node at (0.5,0)  {};
  \node at (1,0)  {1};
  \node at (1.5,0)  {1};
  \node at (2,0)  {1};
  \node at (0,0.5) {$v_3$:};
  \node at (0.5,0.5)  {};
  \node at (1,0.5)  {1};
  \node at (1.5,0.5)  {1};
  \node at (2,0.5)  {0};
  \node at (0,1)  {$v_2$:};
  \node at (0.5,1)  {};
  \node at (1,1)  {1};
  \node at (1.5,1)  {0};
  \node at (2,1)  {0};
  \node at (0,1.5)  {$v_1$:};
  \node at (0.5,1.5)  {};
  \node at (1,1.5)  {0};
  \node at (1.5,1.5)  {0};
  \node at (2,1.5)  {0};
  \node at (0,2)  {};
  \node at (0.5,2) {};

  \node at (1,2) [vertex] {};
\node at (1,2.35)  {$\delta_1$};

\node at (1.5,2.35)  {$\delta_2$};

\node at (2,2.35)  {$\delta_3$};
  \node at (1.5,2) [vertex] {};
  \node at (2,2) [vertex] {};

    \draw[ultra thick, blue]  (1,2) to[out=0, in=180] (3/2,2);
    \draw[ultra thick, blue]  (3/2,2) to[out=0, in=180] (2,2);
    \draw[ultra thick, red]  (1,2) to[out=-30, in=-150] (2,2);

  \node at (3,0)  {1};
  \node at (3.5,0)  {0};
  \node at (4,0)  {0};

  \node at (3,0.5)  {0};
  \node at (3.5,0.5)  {1};
  \node at (4,0.5)  {0};

   \node at (3,1)  {0};
  \node at (3.5,1)  {0};
   \node at (4,1)  {1};

    \node at (3,1.5)  {0};
    \node at (3.5,1.5)  {0};
    \node at (4,1.5)  {0};

  \node at (3,2) [vertex] {};
\node at (3,2.35)  {$\delta_3$};

\node at (3.5,2.35)  {$\delta_2$};

\node at (4,2.35)  {$\delta_1$};
  \node at (3.5,2) [vertex] {};
  \node at (4,2) [vertex] {};

    \draw[ultra thick, blue]  (3,2) to[out=0, in=180] (7/2,2);
    \draw[ultra thick, blue]  (7/2,2) to[out=0, in=180] (4,2);
    \draw[ultra thick, red]  (3,2) to[out=-30, in=-150] (4,2);

     \node at (2.5, -1) {(a)};

  \node at (5.5,0)  {1};
  \node at (6,0)  {1};
  \node at (6.5,0)  {};

  \node at (5.5,0.5)  {1};
  \node at (6,0.5)  {0};

   \node at (5.5,1)  {0};
  \node at (6,1)  {1};
   \node at (6.5,1)  {};

    \node at (5.5,1.5)  {0};
    \node at (6,1.5)  {0};
    \node at (6.5,1.5)  {};

  \node at (5.5,2) [vertex] {};
\node at (5.25,2.35)  {$\delta_2$};

\node at (6.25,2.35)  {$\delta_1$=$\delta_3$};
  \node at (6,2) [vertex] {};
\draw[ultra thick, black]  (5.5,2) to[out=0, in=180] (6,2);

     \node at (5.75,-1)  {(b)};

  \node at (7.5,0)  {1};
  \node at (8,0)  {1};
  \node at (8.5,0)  {0};

  \node at (7.5,0.5)  {1};
  \node at (8,0.5)  {0};
  \node at (8.5,0.5)  {0};

   \node at (7.5,1)  {0};
  \node at (8,1)  {1};
   \node at (8.5,1)  {1};

    \node at (7.5,1.5)  {0};
    \node at (8,1.5)  {1};
    \node at (8.5,1.5)  {0};

  \node at (7.5,2) [vertex] {};
\node at (7.5,2.35)  {$\delta_2$};

\node at (8,2.35)  {$\delta_3$};

\node at (8.5,2.35)  {$\delta_1$};
  \node at (8,2) [vertex] {};
  \node at (8.5,2) [vertex] {};

    \draw[ultra thick, blue]  (7.5,2) to[out=0, in=180] (8,2);
    \draw[ultra thick, red]  (8,2) to[out=0, in=180] (8.5,2);
    \draw[ultra thick, blue]  (7.5,2) to[out=-30, in=-150] (8.5,2);

  \node at (9.5,0)  {1};
  \node at (10,0)  {0};
  \node at (10.5,0)  {1};

  \node at (9.5,0.5)  {1};
  \node at (10,0.5)  {0};
  \node at (10.5,0.5)  {0};

   \node at (9.5,1)  {0};
  \node at (10,1)  {1};
   \node at (10.5,1)  {0};

    \node at (9.5,1.5)  {0};
    \node at (10,1.5)  {0};
    \node at (10.5,1.5)  {0};

  \node at (9.5,2) [vertex] {};
\node at (9.5,2.35)  {$\delta_2$};

\node at (10,2.35)  {$\delta_1$};

\node at (10.5,2.35)  {$\delta_3$};
  \node at (10,2) [vertex] {};
  \node at (10.5,2) [vertex] {};

    \draw[ultra thick, blue]  (9.5,2) to[out=0, in=180] (10,2);
    \draw[ultra thick, red]  (10,2) to[out=0, in=180] (10.5,2);
    \draw[ultra thick, blue]  (9.5,2) to[out=-30, in=-150] (10.5,2);

     \node at (9,-1)  {(c)};

  \node at (12,0)  {1};
  \node at (12.5,0)  {0};
  \node at (13,0)  {0};

  \node at (12,0.5)  {0};
  \node at (12.5,0.5)  {1};
  \node at (13,0.5)  {1};

   \node at (12,1)  {0};
  \node at (12.5,1)  {1};
   \node at (13,1)  {0};

    \node at (12,1.5)  {0};
    \node at (12.5,1.5)  {0};
    \node at (13,1.5)  {1};

  \node at (12,2) [vertex] {};
\node at (12,2.35)  {$\delta_3$};

\node at (12.5,2.35)  {$\delta_1$};

\node at (13,2.35)  {$\delta_2$};
  \node at (12.5,2) [vertex] {};
  \node at (13,2) [vertex] {};

    \draw[ultra thick, red]  (12,2) to[out=0, in=180] (12.5,2);
    \draw[ultra thick, blue]  (12.5,2) to[out=0, in=180] (13,2);
    \draw[ultra thick, red]  (12,2) to[out=-30, in=-150] (13,2);

     \node at (12.5,-1)  {(d)};

  \node at (14.5,0)  {1};
  \node at (15,0)  {1};
  \node at (15.5,0)  {0};

  \node at (14.5,0.5)  {1};
  \node at (15,0.5)  {0};
  \node at (15.5,0.5)  {1};

   \node at (14.5,1)  {1};
  \node at (15,1)  {0};
   \node at (15.5,1)  {0};

    \node at (14.5,1.5)  {0};
    \node at (15,1.5)  {0};
    \node at (15.5,1.5)  {1};

  \node at (14.5,2) [vertex] {};
\node at (14.5,2.35)  {$\delta_1$};

\node at (15,2.35)  {$\delta_3$};

\node at (15.5,2.35)  {$\delta_2$};
  \node at (15,2) [vertex] {};
  \node at (15.5,2) [vertex] {};

    \draw[ultra thick, red]  (14.5,2) to[out=0, in=180] (15,2);
    \draw[ultra thick, blue]  (15,2) to[out=0, in=180] (15.5,2);
    \draw[ultra thick, red]  (14.5,2) to[out=-30, in=-150] (15.5,2);

     \node at (15,-1)  {(e)};

      \end{tikzpicture}}
\medskip

\begin{figure}
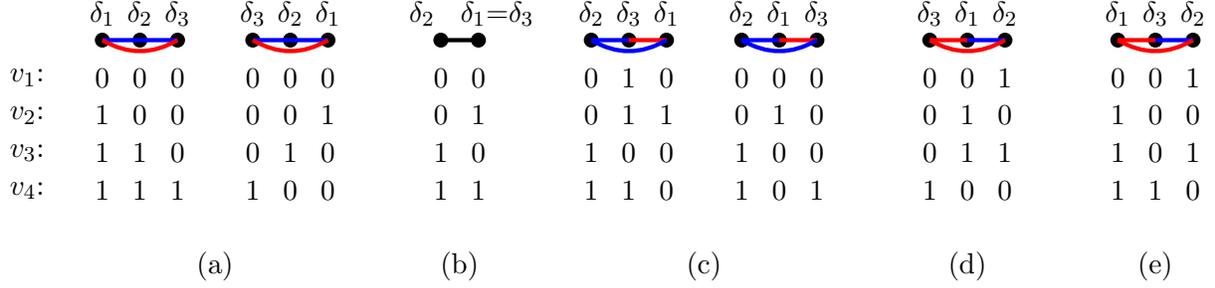

      {\r5nRED1}\caption{Examples of $v_1 < v_2 < v_3 < v_4$ and $\delta_1 = \delta(v_1,v_2), \delta_2 = \delta(v_2,v_3), \delta_3 = \delta(v_3,v_4)$, such that $\chi(v_1,v_2,v_3,v_4) = $ red.  For each case, $v_i$ is represented in binary form with the left-most entry being the most significant bit.}\label{redex2}
  \end{figure}

\end{enumerate}

\noindent See Figure \ref{redex2} for small examples.  Otherwise, $\chi(v_1,v_2,v_3,v_4)$ =  blue.

For the sake of contradiction, suppose that the coloring $\chi$ produces a red $K_5^{(4)}$ on vertices $v_1 < \cdots < v_5$, and let $\delta_i = \delta(v_i,v_{i + 1})$, $1 \leq i \leq 4$.  The proof now falls into the following cases, similar to the previous section.

\medskip

\emph{Case 1}.  Suppose that $\delta_1,\ldots, \delta_{4}$ forms a monotone sequence.  If $\delta_1 > \cdots > \delta_4$, then we have $\phi(\delta_1,\delta_3) = $ red since $\chi(v_1,v_2,v_3,v_4) = $ red.  However, this implies that $\chi(v_1,v_3,v_4,v_5) = $ blue since $\delta(v_1,v_3) = \delta_1$ by Property II, contradiction.  A similar argument follows if $\delta_1 < \cdots < \delta_4$.

\medskip

\emph{Case 2}.  Suppose $\delta_1 > \delta_2 > \delta_3 < \delta_4$.  By Property III, $\delta_4 \neq \delta_2,\delta_1$.  Since $\delta_1 > \delta_2 > \delta_3$, this implies that $\phi(\delta_1,\delta_2) = \phi(\delta_2,\delta_3) = $ blue and $\phi(\delta_1,\delta_3) = $ red.  Now consider the following subcases for $\delta_4$.

\medskip

\emph{Case 2.a}.  Suppose $\delta_4 > \delta_1$.  By Property II, $\delta(v_2,v_4) = \delta_2$.  Since $\chi(v_1,v_2,v_4,v_5) = $ red, this implies that $\phi(\delta_4,\delta_1) = \phi(\delta_4,\delta_2) = $ red.  However, since $\delta_1 = \delta(v_1,v_3)$, this implies $\chi(v_1,v_3,v_4,v_5) = $ blue, contradiction.

\medskip

\emph{Case 2.b}.  Suppose $\delta_2 < \delta_4 < \delta_1$.  Since $\chi(v_2,v_3,v_4,v_5) = $ red, we have $\phi(\delta_4,\delta_2) = \phi(\delta_4,\delta_3) = $ red.  However, this implies that $\chi(v_1,v_2,v_4,v_5) = $ blue since $\delta(v_2,v_4) = \delta_2$, contradiction.

\medskip

\emph{Case 2.c}. Suppose $\delta_3 < \delta_4 < \delta_2$.  Then this would imply $\chi(v_2,v_3,v_4,v_5) = $ blue, contradiction.

  \emph{Case 3.} Suppose $\delta_1 < \delta_2 < \delta_3 > \delta_4$.   This implies that $\phi(\delta_1,\delta_2) = \phi(\delta_2,\delta_3) = $ blue and $\phi(\delta_1,\delta_3) = $ red.  Hence we have $\delta_4 \neq \delta_1,\delta_2$.  Since $\delta(v_2,v_4) = \delta_3$, we have $\chi(v_1,v_2,v_4,v_5) = $ blue, contradiction.

\emph{Case 4}.  Suppose $\delta_1 < \delta_2 > \delta_3 > \delta_4$.   This implies that $\phi(\delta_2,\delta_3) = \phi(\delta_3,\delta_4) = $ blue and $\phi(\delta_2,\delta_4) = $ red.    Hence we have $\delta_1 \neq \delta_3,\delta_4$.  Since $\delta(v_2,v_4) = \delta_2$, we have $\chi(v_1,v_2,v_4,v_5) = $ blue, contradiction.

\medskip

\emph{Case 5}.  Suppose $\delta_1 > \delta_2 < \delta_3 < \delta_4$.  Note that by Property III, $\delta_1 \neq \delta_3,\delta_4$.   Since $\delta_2,\delta_3,\delta_4$ forms a monotone sequence, this implies that $\phi(\delta_2,\delta_3) = \phi(\delta_3,\delta_4) = $ blue and $\phi(\delta_2,\delta_4) = $ red.  Now we consider the following subcases for $\delta_1$.

\medskip

\emph{Case 5.a}.  Suppose $\delta_2 < \delta_1 < \delta_3$.  Then we have $\chi(v_1,v_2,v_3,v_4) = $ blue which is a contradiction.

\medskip

\emph{Case 5.b}. Suppose $\delta_3 < \delta_1 < \delta_4$.  Then we have $\phi(\delta_1,\delta_3) = \phi(\delta_1,\delta_2) = $ red.  Notice that $\delta(v_2,v_4) = \delta_3$ by Property II.  Therefore $\chi(v_1,v_2,v_4,v_5) = $ blue, contradiction.

\medskip

\emph{Case 5.c}. Suppose $\delta_1 > \delta_ 4$.   Then we have $\phi(\delta_1,\delta_3) = \phi(\delta_1,\delta_2) = $ red.  By Property II, $\delta(v_3,v_5) = \delta_4$ which implies $\chi(v_1,v_2,v_3,v_5) = $ blue, contradiction.

\emph{Case 6}.  Suppose $\delta_1 < \delta_2 >  \delta_3 < \delta_4$.   Then $\chi(v_1,v_2,v_3,v_4)= $ red implies that $\phi(\delta_2,\delta_1) = \phi(\delta_2,\delta_3) = $ blue and $\phi(\delta_1,\delta_3) = $ red.   Now if $\delta_2 < \delta_4$, $\chi(v_2,v_3,v_4,v_5) = $ red implies that $\phi(\delta_4,\delta_2) = \phi(\delta_4,\delta_3) = $ red.  By Property II, we have $\delta(v_2,v_4) = \delta_2$, and therefore $\delta_1 < \delta_2 < \delta_4$.  However, this implies $\chi(v_1,v_2,v_4,v_5) = $ blue, contradiction.  Now if $\delta_4 < \delta_2$, then $\chi(v_2,v_3,v_4,v_5) = $ blue, which is again a contradiction.

\medskip

\emph{Case 7}.  Suppose $\delta_1 > \delta_2 < \delta_3 > \delta_4$.  Then $\chi(v_2,v_3,v_4,v_5)= $ red implies that $\phi(\delta_3,\delta_2) = \phi(\delta_3,\delta_4) = $ blue and $\phi(\delta_2,\delta_4) = $ red.  Now if $\delta_1 < \delta_3$, then $\chi(v_1,v_2,v_3,v_4) = $ blue which is a contradiction.  Therefore we can assume that $\delta_1 > \delta_3$.  Since $\chi(v_1,v_2,v_3,v_4) = $ red we have $\phi(\delta_1,\delta_3) = $ red.  By Property II, $\delta(v_1,v_3) = \delta_1$ and $\delta_1 > \delta_3 > \delta_4$.  This implies that $\chi(v_1,v_3,v_4,v_5) = $ blue which is a contradiction.

Next we show that there is no blue $K_m^{(4)}$ in coloring $\chi$, where $m =  2n^4$.   We will prove this statement via the following claims.

\begin{claim}\label{c1} There do not exist vertices $w_1<\cdots< w_n$ in $V$ such that
$\phi(\delta(w_i,w_j), \delta(w_j, w_k))=$ red for every $i<j<k$.
\end{claim}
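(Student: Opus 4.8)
The plan is to argue by contradiction: suppose such vertices $w_1<\cdots<w_n$ in $V$ exist, set $\gamma_i=\delta(w_i,w_{i+1})$ for $1\le i\le n-1$, and extract from the sequence $(\gamma_1,\ldots,\gamma_{n-1})$ a family of at least $\lfloor\log n\rfloor$ pairwise distinct values that forms a monochromatic red clique under $\phi$. Since such a clique is exactly a red $K_{\lfloor\log n\rfloor}$, this contradicts property~(1) of Lemma~\ref{offdiag}, and only property~(1) will be used (properties~(2) and (3) presumably feed into the remaining claims that control the blue clique).

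The first ingredient I would record is a consequence of Property~I: for every $1\le a\le b\le n-1$, the maximum of $\gamma_a,\ldots,\gamma_b$ is attained at a \emph{unique} index. Indeed, if it were attained at $s<t$ then by Property~II one has $\delta(w_s,w_t)=\max\{\gamma_s,\ldots,\gamma_{t-1}\}=\gamma_s$ (taking this to be $\delta(w_s,w_{s+1})=\gamma_s$ when $t=s+1$), while $\delta(w_t,w_{t+1})=\gamma_t=\gamma_s$, contradicting Property~I for $w_s<w_t<w_{t+1}$ (resp.\ $w_s<w_{s+1}<w_{s+2}$). Using this, I would build a Cartesian-tree record chain greedily: let $J_1=\{1,\ldots,n-1\}$; given the contiguous block $J_t$, let $q_t$ be the unique index at which $\gamma$ attains its maximum over $J_t$, and let $J_{t+1}$ be the larger of the two sub-blocks $\{x\in J_t: x<q_t\}$ and $\{x\in J_t: x>q_t\}$; stop once $J_t=\emptyset$. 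Since $|J_{t+1}|\ge\lfloor|J_t|/2\rfloor$ and $|J_1|=n-1$, this runs for $d$ steps with $d\ge\lfloor\log n\rfloor$, producing indices $q_1,\ldots,q_d$; and since each $\gamma_{q_{t+1}}$ is a maximum over a block omitting $q_t$, we get $\gamma_{q_1}>\gamma_{q_2}>\cdots>\gamma_{q_d}$, so these $d$ values are distinct.

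The remaining step is to verify that $\{\gamma_{q_1},\ldots,\gamma_{q_d}\}$ is a red clique of $\phi$. Fix $i<j$. By construction $q_j$ lies in one of the two sub-blocks of $J_i$ flanking $q_i$, and $\gamma_{q_i}$ is the maximum of $\gamma$ over all of $J_i$. If $q_j>q_i$, then the index range $\{q_i,\ldots,q_j-1\}$ lies inside $J_i$ and contains $q_i$, so by Property~II $\delta(w_{q_i},w_{q_j})=\max\{\gamma_{q_i},\ldots,\gamma_{q_j-1}\}=\gamma_{q_i}$; applying the hypothesis to $w_{q_i}<w_{q_j}<w_{q_j+1}$ and using $\delta(w_{q_j},w_{q_j+1})=\gamma_{q_j}$ gives $\phi(\gamma_{q_i},\gamma_{q_j})=$ red. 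If $q_j<q_i$, then symmetrically the range $\{q_j+1,\ldots,q_i\}$ lies inside $J_i$ and contains $q_i$, so $\delta(w_{q_j+1},w_{q_i+1})=\gamma_{q_i}$, and applying the hypothesis to $w_{q_j}<w_{q_j+1}<w_{q_i+1}$ gives $\phi(\gamma_{q_j},\gamma_{q_i})=$ red. Either way $\phi(\gamma_{q_i},\gamma_{q_j})=$ red, completing the red clique and hence the contradiction.

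The step I expect to be the main obstacle is recognizing what object to extract. The naive approach — take a long monotone subsequence of $(\gamma_1,\ldots,\gamma_{n-1})$ by Erd\H os--Szekeres and invoke Property~IV — does not work, because an arbitrary monotone subsequence of the $\gamma_i$ need not be realizable as the consecutive-$\delta$ sequence of any vertex subset. The correct choice is the record chain along the Cartesian tree of $(\gamma_1,\ldots,\gamma_{n-1})$, and the delicate part is the case analysis above showing that for every pair $q_i,q_j$ one of the two candidate intermediate $\delta$-values collapses to $\gamma_{q_i}$ (the larger of the pair), so that the all-red hypothesis can be applied on a genuine triple $w_a<w_b<w_c$.
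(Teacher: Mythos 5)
Your proof is correct and follows essentially the same strategy as the paper: repeatedly take the maximum of the consecutive $\delta$-values, restrict to the larger of the two flanking blocks, and iterate $\lfloor\log n\rfloor$ times to extract a strictly decreasing chain $\gamma_{q_1}>\cdots>\gamma_{q_d}$ which Property~II and the all-red hypothesis show is a red clique under $\phi$, contradicting part~(1) of Lemma~\ref{offdiag}. The only cosmetic difference is that the paper carries the invariant ``every pair in the current block is $\phi$-red with the current maximum'' inductively, while you verify the clique property directly at the end from the record-chain structure; these are the same argument.
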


\begin{proof} Suppose for contradiction that these vertices $w_1 < \cdots < w_n$ exist.  Let $\delta_i=\delta(w_i, w_{i+1})$ and set $\delta_{i_1}=\max_i \delta_i$. Let $W=\{w_i: i \le i_1\}$ and
$W'=\{w_i: i > i_1\}$. By the pigeonhole principle, either $|W| \ge n/2$ or
$|W'|\ge n/2$. Assume without loss of generality that $|W|\ge n/2$ and set $W_1=W$. Observe that by hypothesis and definition of $\delta_{i_1}$, for every $w_i, w_j \in W_1$, with $i<j$, we have
$$\phi(\delta(w_i, w_j),\delta_{i_1})=\phi(\delta(w_i, w_j), \delta(w_j, w_{i_1+1}))= \hbox{ red}.$$
Note that we obtain the same conclusion if $|W'| \geq n/2$ and $W_1=W'$ since
$$\phi(\delta_{i_1}, \delta(w_i,w_j)) = \phi(\delta(w_{i_1}, w_i), \delta(w_i, w_j)) = \textnormal{red}.$$
Now define $\delta_{i_2}=\max_{i < i_1} \delta_{i}$ and repeat the argument above to obtain $W_2$ with $|W_2|\ge n/4$ such that $\phi(\delta(w_i, w_j),\delta_{i_2})=$ red for  every $w_i, w_j \in W_2$, with $i<j$.
Continuing in this way, we obtain $\delta_{i_1}, \delta_{i_2}, \ldots, \delta_{i_{m }}$ for $m=\lfloor \log n \rfloor$, such that $\phi$ colors every pair in the set $\{\delta_{i_1}, \delta_{i_2}, \ldots, \delta_{i_{m}}\}$ red.  This contradicts Lemma \ref{offdiag}, and the statement follows.   \end{proof}

\begin{claim}\label{c2} There do not exist vertices $w_1<\cdots< w_{n^2}$ in $V$ such that every 4-tuple among them is blue under $\chi$ and for every $i<j<k$ with
$\delta(w_i,w_j) > \delta(w_j, w_k)$ we have
$\phi(\delta(w_i,w_j), \delta(w_j, w_k))=$ red.
\end{claim}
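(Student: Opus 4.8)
Suppose for contradiction that vertices $w_1 < \cdots < w_{n^2}$ in $V$ with the two stated properties exist, and put $\epsilon_i := \delta(w_i,w_{i+1})$ for $1 \le i \le n^2-1$. The first step is the analogue of Observation \ref{distinct}: the $\epsilon_i$ are pairwise distinct, and more generally $\delta(w_a,w_b) \neq \delta(w_c,w_d)$ whenever the index intervals $[a,b)$ and $[c,d)$ are disjoint (being maxima of disjoint sets of distinct reals). This is proved exactly as Observation \ref{distinct}: if $\epsilon_i = \epsilon_j$ with $i<j$ and no $\epsilon_r=\epsilon_i$ for $i<r<j$, let $\epsilon_r = \max\{\epsilon_{i+1},\ldots,\epsilon_{j-1}\} = \delta(w_{i+1},w_j)$; if $\epsilon_r>\epsilon_i$ then the $4$-tuple $w_i<w_{i+1}<w_j<w_{j+1}$ has deltas $\epsilon_i<\epsilon_r>\epsilon_j$ with $\epsilon_i=\epsilon_j$, so $\chi$ colours it red by case (b), contradicting that all $4$-tuples are blue; if $\epsilon_r<\epsilon_i$ then Property III forces $\epsilon_i \neq \epsilon_j$, a contradiction.

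The plan is to reduce the claim to producing a \emph{decreasing staircase}: a set of $\ell := \lfloor\log n\rfloor+1$ vertices $w'_1 < \cdots < w'_\ell$ (among the $w_i$) with $\delta(w'_1,w'_2) > \delta(w'_2,w'_3) > \cdots > \delta(w'_{\ell-1},w'_\ell)$. Given such a staircase, Property II yields $\delta(w'_i,w'_j) = \delta(w'_i,w'_{i+1})$ for all $i<j$, so for every $i<j<k$ we have $\delta(w'_i,w'_j) = \delta(w'_i,w'_{i+1}) > \delta(w'_j,w'_{j+1}) = \delta(w'_j,w'_k)$; the descent hypothesis then forces $\phi(\delta(w'_i,w'_j),\,\delta(w'_j,w'_k)) = $ red. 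Hence $\phi$ colours every pair among the $\ell-1 = \lfloor\log n\rfloor$ pairwise-distinct values $\delta(w'_1,w'_2),\ldots,\delta(w'_{\ell-1},w'_\ell)$ red, giving a red $K_{\lfloor\log n\rfloor}$ and contradicting part (1) of Lemma \ref{offdiag}. An \emph{increasing} staircase of length $n+1$ would serve equally well, by a different route: Property IV applied to that sub-configuration, together with the fact that all of its $4$-tuples are $\chi$-blue, excludes case (a) and shows its $n$ consecutive deltas span no bad triple, contradicting part (3) of Lemma \ref{offdiag}. So it suffices to produce one of these two configurations, and for this combinatorial task it is only the all-blue hypothesis (not the descent hypothesis) that is relevant.

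To extract the decreasing staircase we run the greedy peeling of Lemmas \ref{ststart}--\ref{ststart2}, now for the five-case colouring $\chi$ of this section. Set $\delta^*_1 := \max_i \epsilon_i = \epsilon_{i_1}$ and split $\{w_1,\ldots,w_{n^2}\}$ at $i_1$. For a $4$-tuple $w_a<w_b<w_c<w_d$ with $w_a,w_b$ before the split and $w_c,w_d$ after it, Property II gives $\delta(w_b,w_c)=\delta^*_1$, hence the peak pattern $\delta(w_a,w_b) < \delta^*_1 > \delta(w_c,w_d)$ with $\delta(w_a,w_b)\neq\delta(w_c,w_d)$; since this $4$-tuple is $\chi$-blue, case (c) is excluded, i.e. it cannot happen that $\phi(\delta(w_a,w_b),\delta^*_1)=\phi(\delta(w_c,w_d),\delta^*_1)=$ blue while $\phi(\delta(w_a,w_b),\delta(w_c,w_d))=$ red. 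Consequently, if on \emph{both} sides of the split at least $n$ of the consecutive deltas were $\phi$-blue with $\delta^*_1$, those deltas would span a \emph{blue} $K_{n,n}$, contradicting part (2) of Lemma \ref{offdiag}; so on one side at most $n-1$ deltas are $\phi$-blue with $\delta^*_1$. We keep a suitable block, delete those $\le n-1$ deltas (and their vertices) and a vertex realising $\delta^*_1$, and iterate — choosing the next maximum in the kept, pruned block, recording whether the remaining vertices lie to its left or right (the black/white labelling of Lemma \ref{ststart2}), and always keeping the block retaining enough vertices. By Property II the successive maxima $\delta^*_1 > \delta^*_2 > \cdots$ are exactly the consecutive deltas of a decreasing staircase through the chosen vertices. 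Since $N=n^2$ is large enough that $\lfloor\log n\rfloor$ such rounds (each of which roughly halves the surviving set and removes fewer than $n$ further vertices) leave a nonempty remainder, we reach a decreasing staircase of length $\lfloor\log n\rfloor+1$; and if some round instead exhibits an increasing staircase of length $n+1$ in the surviving block, we stop and invoke the alternative of the previous paragraph.

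The main obstacle is this extraction step: establishing the correct analogue of Lemma \ref{ststart} for the present five-case $\chi$ — in particular, checking that the valley cases (d) and (e) do not generate rogue red $4$-tuples that break either the distinctness statement or the ``at most $n-1$ blue deltas on one side'' dichotomy, and handling the branch in which the large side of a split carries many deltas that are $\phi$-\emph{red} with the current maximum (the analogue of the $U$/Observation \ref{ul} sub-argument of Section \ref{clique}, where the descent hypothesis is used once more, now to turn a decreasing run of such red deltas directly into a red $K_{\lfloor\log n\rfloor}$ via Lemma \ref{offdiag}(1)). Once a decreasing staircase of the right length is in hand, the descent hypothesis converts it into the required red clique with no further work.
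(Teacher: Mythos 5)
Your proof takes a genuinely different route from the paper's, and unfortunately the extraction step on which it rests has a real gap.

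Your target---a decreasing staircase $w'_1<\cdots<w'_\ell$ with $\ell=\lfloor\log n\rfloor+1$ and $\delta(w'_1,w'_2)>\cdots>\delta(w'_{\ell-1},w'_\ell)$---is a sound idea: by Property II and the descent hypothesis it does force all of $\phi(\delta(w'_i,w'_{i+1}),\delta(w'_j,w'_{j+1}))$ to be red, contradicting Lemma~\ref{offdiag}(1). Your ``blue delta'' dichotomy (excluding case (c) across the split to get a blue $K_{n,n}$, contradicting Lemma~\ref{offdiag}(2)) is also correct; but note that this exact argument is the one the paper uses in Lemma~\ref{ststart3}, not in Claim~\ref{c2}, and Lemma~\ref{ststart3} is itself proved \emph{using} Claim~\ref{c2}. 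Re-importing that machinery here therefore risks circularity, and in any case it does not deliver what you need.

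The concrete gap is in the peeling. To reach a decreasing staircase you need a ``white'' (keep-right) sub-branch of length $\lfloor\log n\rfloor+1$, while your fallback of an increasing staircase contradicting Lemma~\ref{offdiag}(3) requires a ``black'' (keep-left) sub-branch of length $n+1$. These two thresholds are wildly different: after a pigeonhole you would need either $\approx\log n$ whites or $\approx n$ blacks, so you would need on the order of $n$ rounds, but your stated control---``each of which roughly halves the surviving set and removes fewer than $n$ further vertices''---already destroys a set of size $n^2$ after about $\log n$ rounds. And the ``roughly halves'' claim is itself unsupported: the split at the maximum can be arbitrarily unbalanced, and the blue-$K_{n,n}$ argument only bounds the number of blue-labelled deltas on one side, not the size of that side. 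You flag this yourself as ``the main obstacle,'' and it is.

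What you are missing is that the descent hypothesis already makes the \emph{right} side of the split trivial to handle, with no peeling: for $w_i<w_j<w_k$ in $W'=\{w_r:r>i_1\}$, if $\delta(w_i,w_j)>\delta(w_j,w_k)$ you get $\phi$-red from the hypothesis directly, and if $\delta(w_i,w_j)<\delta(w_j,w_k)$ you apply blueness of the $4$-tuple $(w_{i_1},w_i,w_j,w_k)$---a valley with $\delta_1>\delta_3$---so exclusion of case (e) (not case (c)) forces $\phi(\delta(w_i,w_j),\delta(w_j,w_k))=$ red. Thus if $|W'|\ge n$ you invoke Claim~\ref{c1} on $W'$ and are done; otherwise $|W'|<n$, you keep the \emph{left} side (no choice to make, so no white/black bookkeeping), and you repeat. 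Each round loses fewer than $n$ vertices, so from $n^2$ vertices you can run about $n$ rounds and obtain decreasing maxima $\delta_{i_1}>\cdots>\delta_{i_n}$ at decreasing positions $i_1>\cdots>i_n$; the associated vertex set then has \emph{increasing} consecutive deltas, and Lemma~\ref{offdiag}(3) together with Property IV and case (a) produces a red $4$-tuple. So the paper uses your ``alternative'' route (increasing staircase plus part (3)) as the main and only route, reaches length $n$, uses Claim~\ref{c1} as a subroutine, and invokes case (e)---none of which appear in your argument. The decreasing-staircase-of-length-$\log n$ idea, while attractive, is not what the paper does, and you have not supplied a working way to produce that staircase.
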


\begin{proof} Suppose for contradiction that these vertices $w_1<\cdots< w_{n^2}$ exist.  Let $\delta_i=\delta(w_i, w_{i+1})$ and set $\delta_{i_1}=\max_i \delta_i$.  Let $W=\{w_i: i \le i_1\}$ and
$W'=\{w_i: i > i_1\}$. Let us first suppose that
$|W'|\ge n$. Pick $w_i, w_j, w_k \in W'$ with $i<j<k$.  If $\delta(w_i,w_j) > \delta(w_j,w_k)$, then $\phi(\delta(w_i,w_j), \delta(w_j,w_k)) = $ red by assumption.  If $\delta(w_i,w_j) < \delta(w_j,w_k)$, then  consider the 4-tuple $w_{i_1}, w_i, w_j, w_k$.
Since this 4-tuple is blue under $\chi$, and both
$\phi(\delta(w_{i_1}, w_{i}), \delta(w_{i}, w_j))$ and
$\phi(\delta(w_{i_1}, w_{i}), \delta(w_{j}, w_k))$ are red,
$\phi(\delta(w_{i}, w_{j}), \delta(w_{j}, w_k))$ must also be red.
Now we may apply Claim \ref{c1} to $W'$ to obtain a contradiction.

We may therefore assume that $|W'| < n$ and hence $|W|\ge n^2-n \ge (n-1)^2$. We repeat the previous argument to $W$ to obtain $\delta_{i_2}$ and then $\delta_{i_3}, \ldots, \delta_{i_{n}}$, such that

$$\delta_{i_1} > \delta_{i_2} > \cdots > \delta_{i_{n}} \hspace{.5cm}\textnormal{and}\hspace{.5cm} i_1 > i_2 > \cdots > i_{n}.$$  Now consider the set $S=\{w_{i_1+1}, w_{i_2+1}, \ldots, w_{i_{n}+1}, w_{i_n}\}$, whose corresponding delta set is $A=\{\delta_{i_1}, \delta_{i_2}, \ldots, \delta_{i_{n}}\}$.  Then $A$ is an $n$-set that has the properties of Lemma~\ref{offdiag} part 3.  This implies that there are $j<k<l$ such that $\phi( \delta_{i_j}, \delta_{i_k})=\phi(\delta_{i_k}, \delta_{i_l})=$ blue and $\phi(\delta_{i_j}, \delta_{i_l})=$ red. Consequently,  $\chi(w_{i_j}, w_{i_k}, w_{i_l}, w_{i_l+1})=$ red, a contradiction.\end{proof}

By copying the proof above almost verbatim, we have the following.

\begin{claim}\label{c3} There do not exist vertices $w_1<\cdots< w_{n^2}$ in $V$ such that every 4-tuple among them is blue under $\chi$ and for every $i<j<k$ with
$\delta(w_i,w_j) < \delta(w_j, w_k)$ we have
$\phi(\delta(w_i,w_j), \delta(w_j, w_k))=$ red.\end{claim}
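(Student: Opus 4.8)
The plan is to follow the proof of Claim~\ref{c2} essentially verbatim, but under the left--right reflection of the ordered set $w_1<\cdots<w_{n^2}$; this swaps the role of rule~(e) for rule~(d), while rule~(a) again supplies the final contradiction. As a preliminary I would note that, since every $4$-tuple among $w_1,\dots,w_{n^2}$ is blue under $\chi$, the same argument proving Observation~\ref{distinct} (using rule~(b) together with Property~III) shows that the consecutive gaps $\delta_i=\delta(w_i,w_{i+1})$ are pairwise distinct; consequently every ``maximum over a sub-interval'' appearing below is a strict, unique maximum, which is what justifies all the strict inequalities used.

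Now set $\delta_{i_1}=\max_i\delta_i$, let $W=\{w_i:i\le i_1\}$ and $W'=\{w_i:i>i_1\}$, and distinguish two cases. Suppose first that $|W|\ge n$. I would show that then $\phi(\delta(w_i,w_j),\delta(w_j,w_k))=$ red for \emph{every} $w_i<w_j<w_k$ in $W$: when $\delta(w_i,w_j)<\delta(w_j,w_k)$ this is the hypothesis, and when $\delta(w_i,w_j)>\delta(w_j,w_k)$ I would examine the $4$-tuple $w_i<w_j<w_k<w_{i_1+1}$. By Property~II we have $\delta(w_k,w_{i_1+1})=\delta_{i_1}$, so the gap pattern of this $4$-tuple is $\delta_1>\delta_2<\delta_3$ with $\delta_1<\delta_3=\delta_{i_1}$, which is exactly the configuration of rule~(d). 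Applying the hypothesis to the triples $w_i<w_j<w_{i_1+1}$ and $w_j<w_k<w_{i_1+1}$ -- in each of which the larger gap $\delta_{i_1}$ occurs on the right -- forces $\phi(\delta(w_i,w_j),\delta_{i_1})=\phi(\delta(w_j,w_k),\delta_{i_1})=$ red; since the $4$-tuple is blue, rule~(d) fails, leaving only $\phi(\delta(w_i,w_j),\delta(w_j,w_k))=$ red. Applying Claim~\ref{c1} to the at least $n$ vertices of $W$ then yields a contradiction.

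It remains to treat $|W|<n$, so that $|W'|\ge n^2-n\ge(n-1)^2$. Here I would iterate the same dichotomy on $W'$: repeatedly peel off the unique largest gap of the current sub-interval, retaining its \emph{right} part whenever the left part has size $<n$, and otherwise invoking the previous paragraph together with Claim~\ref{c1}. After $n$ steps this produces gaps $\delta_{i_1}>\delta_{i_2}>\cdots>\delta_{i_n}$ with $i_1<i_2<\cdots<i_n$, and the vertex set $\{w_{i_1},w_{i_1+1},w_{i_2+1},\dots,w_{i_n+1}\}$ has consecutive gaps exactly $\delta_{i_1}>\delta_{i_2}>\cdots>\delta_{i_n}$, i.e.\ a monotone sequence with delta set $\{\delta_{i_1},\dots,\delta_{i_n}\}$. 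Part~3 of Lemma~\ref{offdiag} applied to this $n$-set produces values $\alpha<\beta<\gamma$ among the $\delta_{i_t}$ with $\phi(\alpha,\beta)=\phi(\beta,\gamma)=$ blue and $\phi(\alpha,\gamma)=$ red; by the ``moreover'' clause of Property~IV there is a $4$-tuple inside this set whose three consecutive gaps are $\gamma,\beta,\alpha$ in that (monotone) order, and rule~(a) then colors it red, contradicting that all $4$-tuples among $w_1,\dots,w_{n^2}$ are blue. This proves Claim~\ref{c3}.

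The only real difficulty, as in Claim~\ref{c2}, is the bookkeeping: tracking, through the recursion, which sub-interval is kept and hence the opposite monotonicity directions of the indices $i_t$ and the values $\delta_{i_t}$, and -- most delicately -- verifying in the ``$|W|\ge n$'' step that the auxiliary $4$-tuple genuinely realizes rule~(d) and that its two required $\phi$-colors are precisely the ones the hypothesis delivers. This is exactly the point where the left--right reflection turns the rule~(e) argument of Claim~\ref{c2} into a rule~(d) argument; no new idea beyond distinctness of the $\delta_i$ and Properties~I--IV is needed.
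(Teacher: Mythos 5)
Your proposal is correct and matches the paper's own approach: the paper proves Claim~\ref{c3} only by the remark that one copies the proof of Claim~\ref{c2} ``almost verbatim,'' and your write-up is precisely that left--right mirroring, with rule~(d) replacing rule~(e) when $w_{i_1+1}$ is appended on the right, and the final contradiction again via Lemma~\ref{offdiag}(3), Property~IV, and rule~(a).
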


Now we are ready to show that there is no blue $K_m^{(4)}$ in coloring $\chi$, where $m = 2n^4$.  For the sake of contradiction, suppose we have vertices $v_1,\ldots, v_m \in V$ such that $v_1 < \cdots < v_m$, and $\chi$ colors every $4$-tuple in the set $\{v_1, \ldots, v_m\}$ blue.  Let $\delta_i = \delta(v_i,v_{i + 1})$ for $1\leq i \leq m - 1$.  Notice that by Observation~\ref{distinct} we have $\delta_i \neq \delta_j$ for $1 \leq i < j < m$.

Let $\delta^{\ast}_1 = \max\{\delta_1,\ldots, \delta_m\}$, where $\delta^{\ast}_1 = \delta(v_{i_1},v_{i_1 + 1})$.  Set $$V_1 = \{v_1,v_2,\ldots, v_{i_1 }\} \hspace{.5cm}\textnormal{and}\hspace{.5cm} V_2 = \{v_{i_1 + 1},v_{i_1 + 1}, \ldots, v_{m}\}.$$  Now we establish the following lemma.

\begin{lemma}\label{ststart3}
We have either $|V_1| < n^3 = m/2n$ or $|V_2| < n^3 = m/2n$.
\end{lemma}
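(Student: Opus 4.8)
The plan is to mimic the structure of the proof of Lemma \ref{ststart} from Section \ref{clique}, adapted to the off-diagonal coloring $\chi$ defined in this section. Suppose for contradiction that both $|V_1|, |V_2| \geq m/2n = n^3$. Write $A_1 = \{\delta_1, \ldots, \delta_{i_1 - 1}\}$ and $A_2 = \{\delta_{i_1 + 1}, \ldots, \delta_{m-1}\}$, and split each $A_i$ into $A_i^r$ and $A_i^b$ according to whether $\phi(\delta^\ast_1, \delta_j)$ is red or blue, exactly as in the proof of Lemma \ref{ststart}. The first key step is to observe, via Property II and the definition of $\chi$ (cases (c), (d), (e)), that if $\delta_{j_1} \in A_1$ and $\delta_{j_2} \in A_2$ lie on the same side of $\delta^\ast_1$ with respect to being colored blue under $\phi$, then the induced $4$-tuple $v_{j_1}, v_{j_1+1}, v_{j_2}, v_{j_2+1}$ — whose middle delta value is $\delta^\ast_1$ by Property II — forces $\phi(\delta_{j_1}, \delta_{j_2})$ to take a prescribed color. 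Concretely, if $\delta_{j_1} \in A_1^b$ and $\delta_{j_2} \in A_2^b$, then to avoid $\chi(v_{j_1}, v_{j_1+1}, v_{j_2}, v_{j_2+1}) = $ red (case (c) applied to the zig-zag $\delta_{j_1} < \delta^\ast_1 > \delta_{j_2}$), we must have $\phi(\delta_{j_1}, \delta_{j_2}) = $ red.

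The second step uses the no-blue-$K_{n,n}$ property (Lemma \ref{offdiag}, part 2): since every pair in $A_1^b \times A_2^b$ is red under $\phi$, we cannot have $|A_1^b| \geq n$ and $|A_2^b| \geq n$ simultaneously. So one of them, say $A_2^b$, has size $< n$, whence $|A_2^r| \geq n^3 - n - 1$. (One treats the symmetric subcase — $|A_1^b| < n$ — analogously, passing to $A_1^r$ and the set $V_1$.) Now I would feed the large red-to-$\delta^\ast_1$ set $A_2^r$, together with its consecutive vertex block $U \subseteq V_2$, into a greedy extraction procedure entirely parallel to the one in the proof of Lemma \ref{ststart}: repeatedly pull out the maximum remaining delta value, use an analogue of Observation \ref{ul} to shear off a small piece at each step, and maintain the invariant that each extracted delta value relates to all later extracted values (and to the remaining block) by a fixed $\phi$-color, white/black labels tracking left/right position as before. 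After $2n$ rounds applied to a block of size $\geq n^3$ (shrinking by at most $\approx n^2$ per round), we still have a nonempty survivor, and by pigeonhole $n$ of the extracted values carry the same label; these give vertices $v_{j_1} < \cdots < v_{j_n} < v_{j_n+1}$ with $\delta(v_{j_r}, v_{j_{r+1}})$ strictly decreasing and, crucially, $\phi$ colored red on all "nested" pairs.

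The final step is to derive the contradiction: the $n$-set of extracted delta values, being monotone with all relevant $\phi$-pairs red (so in particular containing no bad triple), contradicts either part 2 or part 3 of Lemma \ref{offdiag} via Property IV — precisely, if the delta set avoids the bad pattern, then by Property IV some $4$-tuple among $\{v_{j_1}, \ldots, v_{j_n+1}\}$ is colored red by $\chi$, contradicting that everything is blue; and if it does not avoid the bad pattern, we get a red $4$-tuple directly from case (a). The main obstacle I anticipate is the analogue of Observation \ref{ul}: one must verify that in the intermediate greedy step, if both halves $T_1, T_2$ of the current block were large, then a monotone run of $\phi$-blue-to-$\delta^\ast_t$ deltas of length $n$ would be forced inside $T_1$ (using that a descent $\delta_r > \delta_{r+1}$ with a specified color would trigger a red $4$-tuple under case (d) or (e) involving the appropriate large witness on the right), and then Lemma \ref{offdiag} part 3 together with Property IV produces a red $4$-tuple — contradiction. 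Getting the color bookkeeping in cases (d) and (e) to line up with the left/right positions of $\delta^\ast_1, \delta^\ast_t$ and the witnesses is the delicate point, but it is exactly the off-diagonal analogue of what was done for $r_4(6,n)$, so it should go through with care.
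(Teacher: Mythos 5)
Your proposal has a genuine error in the first step that, as written, breaks the argument. After setting up the dichotomy on $A_1^b, A_2^b$, you claim that for $\delta_{j_1} \in A_1^b$ and $\delta_{j_2} \in A_2^b$, avoiding case (c) of $\chi$ forces $\phi(\delta_{j_1}, \delta_{j_2}) = \textnormal{red}$. This is backwards. Case (c) declares the 4-tuple red precisely when $\phi(\delta_{j_1}, \delta^\ast_1) = \phi(\delta^\ast_1, \delta_{j_2}) = \textnormal{blue}$ \emph{and} $\phi(\delta_{j_1}, \delta_{j_2}) = \textnormal{red}$; since the two blue conditions already hold by the choice of $A_i^b$, the only way to keep the 4-tuple blue is $\phi(\delta_{j_1}, \delta_{j_2}) = \textnormal{blue}$. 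This sign matters: Lemma~\ref{offdiag} part 2 forbids only a \emph{blue} $K_{n,n}$ (unlike Lemma~\ref{start}, which forbids both colors), so a red bipartite pattern across $A_1^b \times A_2^b$ would give no contradiction at all. Your next sentence invokes the no-blue-$K_{n,n}$ property against what you just said was a red $K_{n,n}$ --- a non sequitur. The correct derivation (as in the paper) gives blue pairs and then correctly applies part 2.

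For the second half, you diverge from the paper's route. The paper extracts a consecutive block $R \subset A_1^r$ with its vertex set $U$, uses the 4-tuple $(u_i, u_j, u_k, v_{i_1+1})$ and case (d) to show that $\phi(\delta(u_i,u_j), \delta(u_j,u_k)) = \textnormal{red}$ whenever $\delta(u_i,u_j) > \delta(u_j,u_k)$, and then simply invokes Claim~\ref{c2} (or Claim~\ref{c3} in the symmetric subcase), which were established beforehand and carry their own short greedy argument. You instead propose to redo a full white/black greedy extraction in the style of the $r_4(6,n)$ proof, including an analogue of Observation~\ref{ul}. This is not obviously wrong, but you acknowledge you have not actually carried out the color bookkeeping for cases (d) and (e) --- and that bookkeeping is exactly the non-trivial content. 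As it stands this is a plan, not a proof, and the key asymmetries of the $r_4(5,n)$ coloring (which is what makes Claims~\ref{c2} and~\ref{c3} necessary and distinct) are precisely what you have deferred.
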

\noindent \emph{Proof of Lemma \ref{ststart3}.}
 For the sake of contradiction, suppose $|V_1|,|V_2| \geq n^3$.  Recall that  $\delta^{\ast}_1 = \delta(v_{i_1},v_{i_1 + 1})$, $V_1 = \{v_1,v_2,\ldots, v_{i_1}\}$, $V_2 = \{v_{i_1 + 1},v_{i_1 + 1}, \ldots, v_{m}\}$, and set $A_1 = \{\delta_1,\ldots, \delta_{i_1 - 1}\}$ and $A_2 = \{\delta_{i_1 + 1},\ldots, \delta_{m-1}\}$.  For $i \in \{1,2\}$, let us partition $A_i = A_i^r\cup A_i^b$ where
$$A_i^r = \{\delta_j \in A_i: \phi(\delta^{\ast}_1,\delta_j) = \textnormal{ red}\}\qquad \hbox{ and } \qquad A^b_i = \{\delta_j \in A_i: \phi(\delta^{\ast}_1,\delta_j) = \textnormal{ blue}\}.$$

Let us first suppose that  $|A^b_i| \geq n$ for $i=1,2$.
Fix $\delta_{j_1} \in A_1^b$ and $\delta_{j_2} \in A_2^b$, and recall that $\delta_{j_1} = \delta(v_{j_1},v_{j_1 + 1})$ and $\delta_{j_2} = \delta(v_{j_2},v_{j_2 + 1})$.  By Observation~\ref{distinct},  $\delta_{j_1} \neq \delta_{j_2}$, and by Property II, we have $\delta(v_{j_1+1},v_{j_2}) = \delta^{\ast}_{1}$.  Since $\chi(v_{j_1},v_{j_1+1},v_{j_2},v_{j_2+1}) = $ blue, this implies that $\phi(\delta_{j_1},\delta_{j_2}) = $ blue.  Consequently, we have a monochromatic blue copy of $K_{n,n}$ in $A$ with respect to $\phi$, which contradicts Lemma~\ref{offdiag} part 2.

 Therefore we have $|A_1^b| \le n$ or $|A_2^b| \le n$.  Let us first suppose that $|A_1^b| \le n$. Since $|A_1|\ge n^3$, by the pigeonhole principle, there is a subset $R \subset A_1^r$ such that $R = \{\delta_j, \delta_{j + 1}, \ldots, \delta_{j + n^2 - 2}\}$, whose corresponding vertices are $U= \{v_j,v_{j+1},\ldots, v_{j + n^2 - 1}\}$.  For simplicity and without loss of generality, let us rename $U= \{u_1,\ldots, u_{n^2}\}$ and $\delta_i = \delta(u_i,u_{i+1})$ for $1 \leq i \leq n^2$.  Now notice that $\phi(\delta(u_i, u_j), \delta(u_j, u_k))=$ red for every $i<j<k$ with $\delta(u_i, u_j)> \delta(u_j, u_k)$.  Indeed, since $\delta(u_i, u_j),\delta(u_j, u_k) \in R$ we have $\phi(\delta^{\ast}_1, \delta(u_i, u_j))) = \phi(\delta^{\ast}_1, \delta(u_j, u_k))) = $ red.  Since $\chi(u_i, u_j, u_k, v_{i_1+1}) = $ blue, this implies that we must have  $\phi(\delta(u_i, u_j), \delta(u_j, u_k))=$ red by definition of $\chi$.  However, by Claim \ref{c2} we obtain a contradiction.

In the case that $|A_2^b| \le n$, a symmetric argument follows, where we apply Claim \ref{c3} instead of Claim \ref{c2} to obtain the contradiction. \qed

Now we can finish the argument that $\chi$ does not color every 4-tuple in the set $\{v_1,\ldots, v_m\}$ blue by copying the proof of Lemma~\ref{ststart2}.  In particular, we will obtain vertices $v_{j_1} < \cdots <  v_{j_{n + 1}} \in \{v_1,\ldots, v_m\}$ such that $\delta(v_{j_1},v_{j_2}), \delta(v_{j_2},v_{j_3}), \ldots, \delta(v_{j_n},v_{j_{n + 1}})$ forms a monotone sequence.  By Property IV and Lemma \ref{offdiag}, $\chi$ will color a 4-tuple in the set $\{v_{j_1}, \ldots, v_{j_{n + 1}}\}$ red.

\bigskip

{\bf Acknowledgment.}  We thank a referee for helpful comments.

\end{document}